\providecommand{\U}[1]{\protect\rule{.1in}{.1in}}
\newtheorem{theorem}{Theorem}
\theoremstyle{plain}
\newtheorem{definition}{Definition}[section]
\newtheorem{remark}{Remark}[section]
\numberwithin{equation}{section}
\numberwithin{theorem}{section}
\begin{document}
\title[The Semi-Periodic Case]{Left-Definite Variations of the Classical Fourier Expansion Theorem, Part II }
\author{L. L. Littlejohn}
\address{Department of Mathematics, Baylor University, Waco, Texas, 76706}
\email{lance\_littlejohn@baylor.edu}
\author{E. L. Smith}
\address{Department of Mathematics, Baylor University, Waco, Texas, 76706}
\email{edward\_smith1@baylor.edu}
\author{A. Zettl}
\address{Department of Mathematics, Northern Illinois University, DeKalb, Illinois, 60115-2880}
\email{zettl@msn.com}
\date{September 1, 2021 (Left-Definite Theory and the Semi-Periodic Fourier Case)}
\subjclass{Primary: 34B24; Secondary: 33B10}
\keywords{self-adjoint operator, Hilbert space, left-definite Hilbert space,
left-definite operator, regular self-adjoint boundary value problem, Fourier series}
\dedicatory{ }
\begin{abstract}
In 2002, Littlejohn and Wellman developed a general left-definite theory for
arbitrary self-adjoint operators in a Hilbert space that are bounded below by
a positive constant. Zettl and Littlejohn, in 2005, applied this general
theory to the classical second-order Fourier operator with periodic boundary
boundary conditions. In this paper, we construct sequences of left-definite
Hilbert spaces $\{H_{n}\}_{n\in\mathbb{N}}$ and left-definite self-adjoint
operators $\{A_{n}\}_{n\in\mathbb{N}}$ associated with the Fourier operator
with semi-periodic boundary conditions. We obtain explicit formulas for the
domain of the square root of the self-adjoint operator $A$ obtained from this
boundary value problem as well as explicit representations of the domains
$\mathcal{D}(A^{n/2})$ for all positive integers $n.$ Furthermore, a Fourier
expansion theorem is given in each left-definite space $H_{n}.$

\end{abstract}
\maketitle

\section{Introduction}

For a self-adjoint operator $A$ in a Hilbert space $H$, bounded below in $H$
by a positive constant, Littlejohn and Wellman \cite{LW} construct a continuum
of Hilbert spaces $\{H_{r}\}_{r>0}$ and a continuum of self-adjoint operators
$\{A_{r}\}_{r>0}$ from the pair $(H,A)$. For each $r>0,$ $H_{r}$ is called the
$r^{th}$ left-definite Hilbert space associated with $(H,A)$ and $A_{r}$ is
called the $r^{th}$ left-definite operator associated with $(H,A).$ These
spaces and operators share many properties that the original operator $A$ and
the Hilbert space $H$ satisfy. Indeed, the spectrum of each $A_{r}$ coincides
with the spectrum of $A$, eigenfunctions of $A$ are also eigenfunctions of
each $A_{r}$ and, in particular, a complete orthogonal set of eigenfunctions
of $A$ in $H$ is also a complete set of eigenfunctions of each $A_{r}$ in
$H_{r}.$

This general theory has been applied to several classical singular
second-order differential equations, including the Hermite \cite{LEW-Hermite},
Legendre \cite{LEW-Legendre}, Jacobi \cite{ELKYW-Jacobi}, and Laguerre
\cite{LW} equations. In these papers, the authors construct sequences - but
not the full continua - of left-definite spaces and left-definite operators
associated with the special self-adjoint operator $A$ that has the
corresponding classical orthogonal polynomials (of Jacobi, Legendre, Hermite,
Legendre, and Laguerre, respectively) as eigenfunctions. In \cite{LZ}, the
authors further applied this theory to the classical regular second-order
Fourier operator endowed with periodic boundary conditions. In this paper, we
extend the methods in \cite{LZ} to explicitly determine the sequences of
left-definite spaces $\{H_{n}\}_{n\in\mathbb{N}}$ and left-definite operators
$\{A_{n}\}_{n\in\mathbb{N}}$ associated with the Fourier operator $A$ in
$H=L^{2}[a,b]$ determined by semi-periodic boundary conditions. More
specifically, we consider the left-definite analysis for the self-adjoint
boundary value problem
\begin{equation}
\left\{
\begin{array}
[c]{c}%
\ell\lbrack y](x)=-y^{\prime\prime}(x)+ky(x)=\lambda y(x)\quad(x\in\lbrack
a,b])\\
y(a)=-y(b);\text{ }y^{\prime}(a)=-y^{\prime}(b);
\end{array}
\right.  \label{Fourier BVP}%
\end{equation}
here $[a,b]$ is a compact interval of the real line and $k$ is a fixed,
positive constant. As in the periodic case, the semi-periodic boundary value
problem is well-studied and important in various contexts in mathematics;
indeed, the eigenfunction expansion in the semi-periodic case produces a well
known Fourier series expansion for $f$ in $L^{2}[a,b].$ We extend this
expansion result to each of the left-definite spaces $H_{n}$ associated with
this self-adjoint boundary value problem$.$ Furthermore, as a consequence of
this analysis, for each positive integer $n,$ we obtain explicit
characterizations of the domains $\mathcal{D}(A^{n/2})$ of $A^{n/2}$ and of
the domains $\mathcal{D}(A_{n})$ of each of the left-definite operators
$A_{n}$ associated with $(H,A).$

As noted in the previously mentioned papers, the terminology
\textit{left-definite }is due to Sch\"{a}fke and Schneider \cite{SS} but the
origins of left-definite theory can be traced back to the work of Hermann Weyl
\cite{Weyl} in the early 1900's. The interest in left-definite theory
originated, at least in part, in the study of classical Sturm-Liouville
equations with a weight function that changes sign. There is a vast literature
for such left-definite problems; we refer to Zettl's text \cite[Chapters 5 and
15]{Zettl} and the recent text of Brown, Bennewitz, and Weikard \cite{BBW} for
excellent discussions of left-definite theory applied to second-order
Sturm-Liouville operators as well as references contained in these texts. We
note that when the general left-definite theory developed by Littlejohn and
Wellman is applied to (self-adjoint) Sturm-Liouville problems, we must assume
that these problems are right-definite; of course, this is not the case for
the left-definite theory laid out in \cite{BBW} or \cite{Zettl}. However,
except for \cite{LW}, the existing literature on left-definite Sturm-Liouville
theory, including that in \cite{BBW} and \cite{Zettl}, is limited to the study
of the \textit{first }left-definite setting (in the notation of \cite{LW}) and
does not discuss a continua of left-definite spaces or operators.

The most general self-adjoint operator $S,$ generated by the Fourier
expression $\ell\lbrack\cdot]$ in $L^{2}[a,b],$ with real coupled boundary
conditions is given by%
\begin{align}
Sf  &  =\ell\lbrack f]\nonumber\\
f\in\mathcal{D}(S)  &  =\left\{  f:[a,b]\rightarrow\mathbb{C}\mid f,f^{\prime
}\in AC[a,b];f^{\prime\prime}\in L^{2}[a,b];\left(
\begin{array}
[c]{r}%
f(a)\\
f^{\prime}(a)
\end{array}
\right)  =K\left(
\begin{array}
[c]{r}%
f(b)\\
f^{\prime}(b)
\end{array}
\right)  \right\}  , \label{General Coupled SA}%
\end{align}
where
\[
K=\left(
\begin{array}
[c]{rr}%
k_{1,1} & k_{1,2}\\
k_{2,1} & k_{2,2}%
\end{array}
\right)  \in SL_{2}(\mathbb{R});
\]
that is, $\det(K)=1.$ The periodic and semi-periodic boundary conditions are,
respectively, obtained from (\ref{General Coupled SA}), by letting $K=I$ or
$K=-I,$ where $I$ is the $2\times2$ identity matrix. These two special cases
are among the few cases where the eigenvalues and eigenfunctions are
explicitly computable. We are currently working on the general left-definite
theory for the general coupled self-adjoint boundary value problem presented
in (\ref{General Coupled SA}). The analysis involved in this general case is
proving to be considerably more difficult than that given in this manuscript
or \cite{LZ}. Indeed, for non-diagonal $K,$ additional analytic tools are
required to determine the sequences $\{H_{n}\}_{n=1}^{\infty}$ and
$\{A_{n}\}_{n=1}^{\infty}$ of the left-definite spaces and left-definite
operators associated with $(L^{2}[a,b],S).$ More specifically, methods
developed in this manuscript (and in \cite{LZ}) appear to only allow us to
determine $\{H_{2n}\}_{n=1}^{\infty}$ and $\{A_{2n}\}_{n=1}^{\infty}$ in the
general non-diagonal case of $K.$

The contents of this paper are as follows. In Section
\ref{Left-definite review}, we review the left-definite theory developed by
Littlejohn and Wellman. Section \ref{Section 3} deals with the semi-periodic
self-adjoint operator $A$ generated from (\ref{Fourier BVP}) and its
properties, including information about its spectrum, its eigenfunctions and
the fact that $A$ is bounded below in $L^{2}[a,b]$ by $kI,$ where $k$ is the
constant appearing in the differential expression in (\ref{Fourier BVP}). The
left-definite analysis of $A$ - specifically, the construction of the sequence
of left-definite spaces $\{H_{n}\}_{n\in\mathbb{N}}$ and left-definite
operators $\{A_{n}\}_{n\in\mathbb{N}}$ - is developed in Section
\ref{LD Spaces and Operators}. In addition, we develop a Fourier expansion
theorem valid in each left-definite space $H_{n}$ in Section
\ref{LD Spaces and Operators}. Lastly, in Section \ref{Concluding remarks},
some special cases of these left-definite spaces and left-definite operators
are discussed.

\section{A Review of Left-Definite Theory\label{Left-definite review}}

Let $V$ denote a vector space (over the complex field $\mathbb{C)}$ and
suppose that $(\cdot,\cdot)$ is an inner product with norm $\left\|
\cdot\right\|  $ generated from $(\cdot,\cdot)$ such that $H=(V,(\cdot
,\cdot))$ is a Hilbert space. Suppose $V_{r}$ (the subscripts will be made
clear shortly) is a linear manifold (vector subspace) of the vector space $V$
and let $(\cdot,\cdot)_{r}$ and $\left\|  \cdot\right\|  _{r}$ denote an inner
product and its associated norm, respectively, over $V_{r}$ (quite possibly
different from $(\cdot,\cdot)$ and $\left\|  \cdot\right\|  $). We denote the
resulting inner product space by $H_{r}=(V_{r},\left(  \cdot,\cdot\right)
_{r}).$

Throughout this section, we assume that $A:\mathcal{D}(A)\subset H\rightarrow
H$ is a self-adjoint operator that is bounded below by $kI$ for some $k>0;$
that is,
\[
(Ax,x)\geq k(x,x)\quad(x\in\mathcal{D}(A)).
\]
It follows that $A^{r},$ for each $r>0,$ is a self-adjoint operator that is
bounded below in $H$ by $k^{r}I.$

We now define an $r^{th}$ left-definite space associated with $(H,A).$

\begin{definition}
\label{Definition of rth LD space} Let $r>0$ and suppose $V_{r}$ is a linear
manifold of the Hilbert space $H$ $=(H,(\cdot,\cdot))$ and $\left(
\cdot,\cdot\right)  _{r}$ is an inner product on $V_{r}.$ Let $H_{r}%
=(V_{r},(\cdot,\cdot)_{r}).$ We say that $H_{r}$ is an $r^{th}$ left-definite
space\textbf{ }\textit{associated with the pair }$(H,A)$ if each of the
following conditions hold:\newline$(1)$ $H_{r}$ is a Hilbert space,\newline%
$(2)$ $\mathcal{D}(A^{r})$ is a linear manifold of $V_{r},$ \newline$(3)$
$\mathcal{D}(A^{r})$ is dense in $H_{r},$ \newline$(4)$ $\left(  x,x\right)
_{r}\geq k^{r}\left(  x,x\right)  \quad(x\in V_{r}),$ and\newline$(5)$
$\left(  x,y\right)  _{r}=\left(  A^{r}x,y\right)  \quad(x\in\mathcal{D}%
(A^{r}),\;y\in V_{r}).$
\end{definition}

It is not clear, from the definition, if such a self-adjoint operator $A$
generates a left-definite space for a given $r>0.$ However, in \cite{LW}, the
authors prove the following theorem; the Hilbert space spectral theorem plays
a critical role in establishing this result.

\begin{theorem}
\label{E/U LD Theorem} $($see \cite[Theorems 3.1 and 3.4]{LW}$)$ Suppose
$A:\mathcal{D}(A)\subset H\rightarrow H$ is a self-adjoint operator that is
bounded below by $kI,$ for some $k>0.$ For $r>0,$ define $H_{r}=(V_{r}%
,(\cdot,\cdot)_{r})$ by%
\begin{equation}
V_{r}=\mathcal{D}(A^{r/2}), \label{V_r}%
\end{equation}
and%
\[
(x,y)_{r}=(A^{r/2}x,A^{r/2}y)\quad(x,y\in V_{r}).
\]
Then $H_{r}$ is a left-definite space associated with the pair $(H,A).$
Moreover, suppose $H_{r}^{\prime}:=(V_{r}^{\prime},(\cdot,\cdot)_{r}^{\prime
})$ is another $r^{th}$ left-definite space associated with the pair $(H,A).$
Then $V_{r}=V_{r}^{\prime}$ and $(x,y)_{r}=(x,y)_{r}^{\prime}$ for all $x,y\in
V_{r}=V_{r}^{\prime};$ i.e. $H_{r}=H_{r}^{\prime}.$ That is to say,
$H_{r}=(V_{r},(\cdot,\cdot)_{r})$ is the \underline{unique} left-definite
space associated with $(H,A).$ Moreover,

\begin{enumerate}
\item[(a)] suppose $A$ is bounded. Then, for each $r>0,$

\begin{enumerate}
\item[(i)] $V=V_{r};$

\item[(ii)] the inner products $(\cdot,\cdot)$ and $(\cdot,\cdot)_{r}$ are equivalent.
\end{enumerate}

\item[(b)] suppose $A$ is unbounded. Then, for each $r,s>0,$

\begin{enumerate}
\item[(i)] $V_{r}$ is a proper subspace of $V;$

\item[(ii)] $V_{s}$ is a proper subspace of $V_{r}$ whenever $0<r<s;$

\item[(iii)] the inner products $(\cdot,\cdot)$ and $(\cdot,\cdot)_{r}$ are
not equivalent for any $r>0;$

\item[(iv)] the inner products $(\cdot,\cdot)_{r}$ and $(\cdot,\cdot)_{s}$ are
not equivalent for any $r,s>0,$ $r\neq s.$
\end{enumerate}
\end{enumerate}
\end{theorem}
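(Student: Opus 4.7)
The plan is to build $H_r$ directly from the spectral theorem and then verify the five clauses of Definition \ref{Definition of rth LD space}. Since $A$ is self-adjoint with $A \geq kI > 0$, the spectral theorem produces a resolution of the identity $\{E_\lambda\}$ supported in $[k,\infty)$, and for every $r > 0$ the operator
\[
A^{r/2} = \int_k^\infty \lambda^{r/2}\, dE_\lambda
\]
is positive and self-adjoint with $\mathcal{D}(A^{r/2}) = \{x \in H : \int_k^\infty \lambda^r\, d(E_\lambda x, x) < \infty\}$. From these spectral formulas it is immediate that $\mathcal{D}(A^r) \subseteq \mathcal{D}(A^{r/2}) = V_r$ (clause (2)), that $(x,x)_r = \|A^{r/2}x\|^2 \geq k^r(x,x)$ (clause (4)), and that $(x,y)_r = (A^{r/2}x, A^{r/2}y) = (A^r x, y)$ whenever $x \in \mathcal{D}(A^r)$ and $y \in V_r$, using self-adjointness of $A^{r/2}$ (clause (5)). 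Completeness of $H_r$ (clause (1)) follows because clause (4) makes $\|\cdot\|_r$ equivalent to the graph norm of the closed operator $A^{r/2}$. For the density required in clause (3), I would truncate: given $x \in V_r$, set $x_n = E_n x$; then $x_n \in \mathcal{D}(A^s)$ for all $s > 0$, and dominated convergence against the spectral measure gives $\|x - x_n\|_r^2 = \int_n^\infty \lambda^r\, d(E_\lambda x, x) \to 0$.

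For uniqueness, suppose $H_r' = (V_r', (\cdot,\cdot)_r')$ is another $r$th left-definite space. Applying clause (5) in each of $H_r, H_r'$ to pairs $x, y \in \mathcal{D}(A^r)$ gives $(x,y)_r = (A^r x, y) = (x,y)_r'$, so the two inner products agree on $\mathcal{D}(A^r)$. Given $x \in V_r$, choose $x_n \in \mathcal{D}(A^r)$ with $x_n \to x$ in $\|\cdot\|_r$; then $(x_n)$ is $\|\cdot\|_r'$-Cauchy and converges in $H_r'$ to some $x' \in V_r'$. Clause (4) applied in both spaces forces $x_n \to x$ and $x_n \to x'$ in the ambient Hilbert space $H$, so $x = x' \in V_r'$. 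Symmetry yields $V_r = V_r'$, and density forces the inner products to coincide everywhere.

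The remaining statements (a) and (b) reduce to spectral facts about $A$. If $A$ is bounded then so is $A^{r/2}$, whence $V_r = H = V$ and $k^r\|x\|^2 \leq \|x\|_r^2 \leq \|A^{r/2}\|^2 \|x\|^2$ proves equivalence. If $A$ is unbounded, then $A^{r/2}$ is unbounded for every $r>0$, forcing $V_r \subsetneq V$; the strict inclusion $V_s \subsetneq V_r$ for $0 < r < s$ comes from constructing, via the spectral measure, vectors $x$ with $\int \lambda^r\, d(E_\lambda x,x) < \infty$ but $\int \lambda^s\, d(E_\lambda x,x) = \infty$. Inequivalence of any two of the norms in the unbounded case follows by a density-plus-completeness argument: if two such norms were equivalent on the smaller domain, that domain would be complete hence closed in the larger, contradicting the density of the smaller in the larger (which is proved exactly as in clause (3)). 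The one delicate step that I expect to be the only real obstacle is the density in clause (3); everything else is direct manipulation of the spectral calculus combined with a standard completion argument carried out inside $H$.
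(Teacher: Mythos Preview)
Your sketch is correct and follows the route the paper itself signals: the paper does not prove this theorem but quotes it from \cite{LW}, remarking only that ``the Hilbert space spectral theorem plays a critical role in establishing this result.'' Your argument is exactly that spectral-theoretic proof---define $A^{r/2}$ via the resolution of the identity, verify the five clauses directly, use spectral truncations $E_n x$ for density, and run a completion-inside-$H$ argument for uniqueness---so there is no substantive difference to report. One small comment: in part (b)(ii) your phrase ``constructing, via the spectral measure, vectors $x$ with $\int \lambda^r\,d(E_\lambda x,x)<\infty$ but $\int \lambda^s\,d(E_\lambda x,x)=\infty$'' is the only place where a reader might ask for an explicit construction; it is routine (pick a sequence $\mu_n\to\infty$ in $\sigma(A)$, choose unit vectors $u_n$ in $E_{[\mu_n,\mu_{n+1})}H$, and set $x=\sum c_n u_n$ with $c_n$ tuned so that $\sum c_n^2\mu_n^r<\infty$ while $\sum c_n^2\mu_n^s=\infty$), but worth writing out once.
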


\begin{remark}
\textbf{\noindent}\noindent\noindent\label{Importance of (5)} Although all
five conditions in Definition \ref{Definition of rth LD space} are necessary
in the proof of Theorem \ref{E/U LD Theorem}, the most important property, in
a sense, is the one given in (5). Indeed, this property asserts that the
$r^{th}$ left-definite inner product is generated from the $r^{th}$ power of
$A.$ If $A$ is generated from a Lagrangian symmetric differential expression
$\ell\lbrack\cdot],$ we see that the $r^{th}$ powers of $A$ are then
determined by the $r^{th}$ powers of $\ell\lbrack\cdot].$ Consequently, in
this case, it is possible to explicitly obtain these powers only when $r$ is a
positive integer. We refer the reader to \cite{LW} where, however, an example
of a self-adjoint operator $A$ in $\ell^{2}(\mathbb{N})$ is discussed in which
the \emph{entire} continuum of left-definite spaces is explicitly obtained.
\end{remark}

\begin{definition}
\label{Definition of LD operators} For $r>0,$ let $H_{r}=(V_{r},(\cdot
,\cdot)_{r})$ denote the $r^{th}$ left-definite space associated with $(H,A)$.
If there exists a self-adjoint operator $A_{r}:\mathcal{D}(A_{r})\subset$
$H_{r}\rightarrow H_{r}$ that is a restriction of $A;$ that is,
\begin{equation}
A_{r}f=Af\quad(f\in\mathcal{D}(A_{r})\subset\mathcal{D}(A)),\nonumber
\end{equation}
we call such an operator an\textbf{ }$r^{th}$\textbf{ }left-definite operator
associated with\textbf{ }$(H,A).$
\end{definition}

Again, it is not immediately clear that such an $A_{r}$ exists for a given
$r>0$; in fact, however, as the next theorem shows, $A_{r}$ exists and is
unique for each $r>0.$

\begin{theorem}
\label{Main Theorem2} $($see \cite[Theorems 3.2 and 3.4]{LW}$)$ Suppose $A$ is
a self-adjoint operator in a Hilbert space $H$ that is bounded below by $kI$
for some $k>0.$\ For any $r>0,$ let $H_{r}=(V_{r},(\cdot,\cdot)_{r})$ be the
$r^{th}$ left-definite space associated with $(H,A).$ Then there exists a
unique left-definite operator $A_{r}$ in $H_{r}$ associated with $(H,A);$ in
fact,
\[
\mathcal{D}(A_{r})=V_{r+2}.
\]
Each $A_{r}$ is bounded below in $H_{r}$ by $kI$. Moreover, from Theorem
\ref{E/U LD Theorem}, we have the following results:

\begin{enumerate}
\item[(a)] Suppose $A$ is bounded. Then, for each $r>0,$ $A=A_{r}.$

\item[(b)] Suppose $A$ is unbounded. Then, for each $r,s>0,$

\begin{enumerate}
\item[(i)] $\mathcal{D}(A_{r})$ is a proper subspace of $\mathcal{D}(A)$ for
each $r>0;$

\item[(ii)] $\mathcal{D}(A_{s})$ is a proper subspace of $\mathcal{D}(A_{r})$
whenever $0<r<s.$
\end{enumerate}
\end{enumerate}
\end{theorem}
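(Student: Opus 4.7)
The strategy is to define $A_r$ as the restriction of $A$ to $V_{r+2} = \mathcal{D}(A^{(r+2)/2})$ and to establish its self-adjointness by exhibiting an explicit unitary equivalence between $(A_r, H_r)$ and $(A, H)$. All remaining assertions, including the lower bound and the bounded/unbounded dichotomy, then follow with essentially no extra work from the corresponding properties of $A$ in $H$ together with Theorem \ref{E/U LD Theorem}.

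Using the Borel functional calculus for $A$, set $U := A^{r/2}$. Because $\sigma(A) \subset [k,\infty)$ with $k > 0$, the spectral integral shows that $A^{r/2}$ is a bijection from $V_r = \mathcal{D}(A^{r/2})$ onto $H$, with bounded inverse $A^{-r/2}$ defined on all of $H$. Since $(x,y)_r = (A^{r/2}x, A^{r/2}y) = (Ux, Uy)$ by construction, $U$ is a Hilbert space isomorphism from $H_r$ onto $H$. Now define $A_r f := Af$ for $f \in V_{r+2}$. For such $f$, the identity $\int_k^\infty \lambda^r\, d\|E_\lambda(Af)\|^2 = \int_k^\infty \lambda^{r+2}\, d\|E_\lambda f\|^2 < \infty$ shows both that $f \in \mathcal{D}(A)$ and that $Af \in V_r$, so $A_r : V_{r+2} \subset H_r \to H_r$ is well defined. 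A direct functional-calculus computation then yields $U A_r U^{-1} g = A^{r/2} \cdot A \cdot A^{-r/2} g = A g$ for every $g \in \mathcal{D}(A)$, so $A_r$ is unitarily equivalent to $A$. This at once delivers self-adjointness of $A_r$ in $H_r$, density of $\mathcal{D}(A_r) = V_{r+2} = U^{-1}(\mathcal{D}(A))$ in $H_r$, and the lower bound $(A_r f, f)_r = (A \cdot A^{r/2} f, A^{r/2} f) \ge k\,\|A^{r/2}f\|^2 = k\,(f,f)_r$.

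For uniqueness, suppose $B$ is any self-adjoint operator in $H_r$ that is a restriction of $A$. Then for each $f \in \mathcal{D}(B)$ both $f$ and $Bf = Af$ lie in $V_r = \mathcal{D}(A^{r/2})$; translating this through the spectral integral forces $\int \lambda^{r+2}\, d\|E_\lambda f\|^2 < \infty$, i.e., $f \in V_{r+2} = \mathcal{D}(A_r)$. Hence $B \subset A_r$, and since both operators are self-adjoint in $H_r$ they must coincide. Finally, the bounded/unbounded assertions are immediate: if $A$ is bounded then $\mathcal{D}(A^s) = H$ for every $s > 0$, so $V_{r+2} = H = \mathcal{D}(A)$ and $A_r = A$, giving (a); if $A$ is unbounded, the strict inclusions in (b) follow by applying Theorem \ref{E/U LD Theorem}(b)(ii) to the pairs $(2, r+2)$ and $(r+2, s+2)$ respectively.

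The principal technical obstacle is establishing that $A_r$ is self-adjoint, and not merely symmetric, with respect to the new inner product $(\cdot,\cdot)_r$. A direct adjoint-domain calculation is possible but intricate; the unitary-equivalence device $U = A^{r/2}$ sidesteps it entirely by transporting the self-adjointness of $A$ in $H$ over to $A_r$ in $H_r$. A secondary subtlety is that the operator identities $A \cdot A^{r/2} = A^{(r+2)/2} = A^{r/2} \cdot A$ hold only on their natural and a priori smaller domains, but these domain bookkeeping issues are cleanly resolved by the spectral integrals just noted and by the identification $V_{r+2} = U^{-1}(\mathcal{D}(A))$.
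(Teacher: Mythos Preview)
The paper does not supply its own proof of this theorem; it is quoted in Section~\ref{Left-definite review} as background, with the proof deferred to \cite[Theorems 3.2 and 3.4]{LW}. So there is no in-paper argument to compare against.

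Judged on its own merits, your argument is correct and efficient. The key move---observing that $U=A^{r/2}$ is a unitary from $H_r$ onto $H$ and that it intertwines $A_r$ with $A$---is exactly the right way to transfer self-adjointness, the lower bound, and the domain identification $\mathcal{D}(A_r)=U^{-1}(\mathcal{D}(A))=V_{r+2}$ all at once. Your uniqueness argument is also clean: if $B$ is self-adjoint in $H_r$ and restricts $A$, then $f\in\mathcal{D}(B)$ forces $f,Af\in V_r$, and the spectral-measure identity $\int\lambda^{r}\,d\|E_\lambda(Af)\|^2=\int\lambda^{r+2}\,d\|E_\lambda f\|^2$ puts $f$ in $V_{r+2}$, whence $B\subset A_r$ and equality follows from maximality of self-adjoint operators. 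The deductions of (a) and (b) from Theorem~\ref{E/U LD Theorem} are likewise correct: in the bounded case every $V_s$ equals $H$, so $A_r$ and $A$ have the same domain and action; in the unbounded case the strict inclusions $V_{s+2}\subsetneq V_{r+2}\subsetneq V_2=\mathcal{D}(A)$ for $0<r<s$ come straight from Theorem~\ref{E/U LD Theorem}(b)(ii).

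One small point worth tightening for a written version: when you invoke $A\cdot A^{r/2}=A^{(r+2)/2}=A^{r/2}\cdot A$, you correctly flag the domain bookkeeping, but it would be cleaner to state once that for commuting Borel functions $\varphi,\psi\ge 0$ of $A$ one has $\varphi(A)\psi(A)\subset(\varphi\psi)(A)$ with equality on $\mathcal{D}((\varphi\psi)(A))\cap\mathcal{D}(\psi(A))$, and then specialize. This is standard spectral-calculus fare and your sketch already handles it adequately.
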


The last theorem that we state in this section shows that the point spectrum,
continuous spectrum, and resolvent set of a self-adjoint operator $A$ and each
of its associated left-definite operators $A_{r}$ $(r>0)$ are identical.

\begin{theorem}
\label{Main Theorem3} $($see \cite[Theorem 3.6]{LW}$)$ For each $r>0,$ let
$A_{r}$ denote the $r^{th}$ left-definite operator associated with the
self-adjoint operator $A$ that is bounded below by $kI,$ where $k>0.$ Then
\end{theorem}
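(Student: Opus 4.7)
The plan is to exploit the spectral theorem for $A$ in conjunction with the identifications $V_r=\mathcal{D}(A^{r/2})$ and $\mathcal{D}(A_r)=V_{r+2}$ furnished by Theorems \ref{E/U LD Theorem} and \ref{Main Theorem2}. The guiding intuition is that, since the $r^{th}$ left-definite inner product is generated by a power of $A$ itself (property (5) of Definition \ref{Definition of rth LD space}), the spectral resolution $\{E(\mu)\}$ of $A$ doubles as a spectral resolution for $A_r$ when read in $H_r$, so all three pieces of the spectral decomposition must coincide.

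For the point-spectrum equality, if $Ax=\lambda x$ with $x\neq 0$ then $x$ is an eigenvector of every power $A^{n}$, hence $x\in\bigcap_{n\in\mathbb{N}}\mathcal{D}(A^{n})\subseteq V_{r+2}=\mathcal{D}(A_r)$ and $A_rx=Ax=\lambda x$. The converse is immediate because $A_r$ is a restriction of $A$ by Definition \ref{Definition of LD operators}. For the inclusion $\rho(A)\subseteq\rho(A_r)$, fix $\lambda\in\rho(A)$ and let $R=(A-\lambda I)^{-1}$, which by functional calculus equals $\int(\mu-\lambda)^{-1}\,dE(\mu)$ and commutes with $A^{r/2}$ on $V_r$. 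For $y\in V_r$, setting $x=Ry$ yields $Ax=\lambda x+y\in V_r$, which forces $x\in V_{r+2}=\mathcal{D}(A_r)$; thus $R$ algebraically inverts $A_r-\lambda I$ on $H_r$, and the commutation gives
\begin{equation*}
\|Ry\|_r=\|A^{r/2}Ry\|=\|RA^{r/2}y\|\leq\|R\|_{H\to H}\,\|y\|_r,
\end{equation*}
showing $\lambda\in\rho(A_r)$.

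For the reverse inclusion $\sigma(A)\subseteq\sigma(A_r)$, the eigenvalue case is already handled, so I would suppose $\lambda\in\sigma_c(A)$. Then $E(I_n)H\neq\{0\}$ for every $I_n=[\lambda-1/n,\lambda+1/n]\cap[k,\infty)$, and since $E(I_n)H\subseteq\bigcap_{s>0}V_s$, one may choose $x_n\in E(I_n)H$ with $\|x_n\|_r=1$. For $n$ sufficiently large, the functional calculus yields
\begin{equation*}
\|(A_r-\lambda I)x_n\|_r^{2}=\int_{I_n}\mu^{r}(\mu-\lambda)^{2}\,d\|E(\mu)x_n\|^{2}\leq\frac{(\lambda+1/n)^{r}}{n^{2}\,(\lambda-1/n)^{r}}\longrightarrow 0,
\end{equation*}
producing a Weyl sequence for $A_r$ in $H_r$ and forcing $\lambda\in\sigma(A_r)$. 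Finally, $\sigma_c(A)=\sigma_c(A_r)$ follows from the disjoint decomposition $\mathbb{C}=\rho\sqcup\sigma_p\sqcup\sigma_c$ applied to both operators.

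The main obstacle is precisely this Weyl-sequence step, since both normalization and decay must be controlled simultaneously in the weighted norm $\|\cdot\|_r=\|A^{r/2}\cdot\|$. Spectral localization to the bounded interval $I_n$, on which the multiplier $\mu^{r/2}$ is uniformly bounded above and away from zero, is what makes the $H$- and $H_r$-norms comparable on $E(I_n)H$ and converts the classical spectral-approximation argument for $A$ into one valid for $A_r$; without this localization the $A^{r/2}$ factor could amplify the high-frequency tails of a Weyl sequence uncontrollably.
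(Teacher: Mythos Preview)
The paper does not prove this theorem at all: it is stated in Section~\ref{Left-definite review} purely as a review result, with the proof delegated to \cite[Theorem~3.6]{LW}. There is therefore no ``paper's own proof'' to compare against.

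Your argument is sound and is in fact a standard way to establish this result. A few remarks. First, for the eigenvector step you write $x\in\bigcap_{n\in\mathbb{N}}\mathcal{D}(A^{n})\subseteq V_{r+2}$; strictly speaking you are using that $\mathcal{D}(A^{n})\subseteq\mathcal{D}(A^{(r+2)/2})$ once $n>(r+2)/2$, which follows from the monotonicity in Theorem~\ref{E/U LD Theorem}(b)(ii). Second, in the resolvent step the key implication ``$x\in V_r$ and $Ax\in V_r$ $\Rightarrow$ $x\in V_{r+2}$'' is exactly the spectral identity $\mathcal{D}(A^{r/2+1})=\{z\in\mathcal{D}(A):Az\in\mathcal{D}(A^{r/2})\}$; you might make that explicit. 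Third, your displayed Weyl-sequence estimate is correct but more elaborate than necessary: since you normalized $\|x_n\|_r=1$, the bound
\[
\|(A_r-\lambda I)x_n\|_r^{2}=\int_{I_n}\mu^{r}(\mu-\lambda)^{2}\,d\|E(\mu)x_n\|^{2}\leq\frac{1}{n^{2}}\int_{I_n}\mu^{r}\,d\|E(\mu)x_n\|^{2}=\frac{1}{n^{2}}
\]
follows directly, without passing through the $H$-norm of $x_n$. Finally, the concluding disjoint decomposition $\mathbb{C}=\rho\sqcup\sigma_p\sqcup\sigma_c$ is valid here because both $A$ and $A_r$ are self-adjoint and hence have empty residual spectrum; this is worth stating, since for general closed operators the decomposition would require a fourth piece.
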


\begin{enumerate}
\item[(a)] \textit{the point spectra of }$A$\textit{ and }$A_{r}$\textit{
coincide; i.e. }$\sigma_{p}(A_{r})=\sigma_{p}(A);$

\item[(b)] \textit{the continuous spectra of }$A$\textit{ and }$A_{r}$\textit{
coincide; i.e. }$\sigma_{c}(A_{r})=\sigma_{c}(A);$

\item[(c)] \textit{the resolvent sets of }$A$\textit{ and }$A_{r}$\textit{ are
equal; i.e. }$\rho(A_{r})=\rho(A).$
\end{enumerate}

We refer the reader to \cite{LW} for additional theorems, and examples,
associated with the general left-definite theory of self-adjoint operators $A$
that are bounded below; see also \cite{LEW-Hermite}, \cite{LEW-Legendre},
\cite{ELKYW-Jacobi}, \cite{LW}, and \cite{LZ}.

\section{The Fourier Operator $A$ with semi-periodic boundary
conditions\label{Section 3}}

From here on, we let
\begin{equation}
H:=L^{2}[a,b], \label{H}%
\end{equation}
for $-\infty<a<b<\infty,$ denote the classical Hilbert space of Lebesgue
measurable functions $f:[a,b]\rightarrow\mathbb{C}$ satisfying $\int_{a}%
^{b}\left\vert f(x)\right\vert ^{2}dx<\infty$ with inner product%
\[
(f,g)_{L^{2}[a,b]}:=\int_{a}^{b}f(x)\overline{g}(x)dx\quad(f,g\in H),
\]
and associated norm%
\[
\left\Vert f\right\Vert _{L^{2}[a,b]}=(f,f)_{L^{2}[a,b]}^{1/2}\quad(f\in H).
\]
Fix $k>0$ and let $\ell\lbrack\cdot]$ denote the regular differential
(Fourier) expression defined by%
\begin{equation}
\ell\lbrack f](x):=-f^{\prime\prime}(x)+kf(x)\quad(x\in\lbrack a,b]).
\label{The Fourier expression}%
\end{equation}
The operator $A$ that we deal with in this paper is defined as%
\begin{equation}
\left\{
\begin{array}
[c]{l}%
Af=\ell\lbrack f]\quad(f\in\mathcal{D}(A))\\
\mathcal{D}(A)=\{f:[a,b]\rightarrow\mathbb{C}\mid f,f^{\prime}\in
AC[a,b];f^{\prime\prime}\in H;f(a)=-f(b);f^{\prime}(a)=-f^{\prime}(b)\}.
\end{array}
\right.  \label{Definition of A}%
\end{equation}
It is well known (see, for example, \cite{Naimark} or \cite{Zettl}) that $A$
is self-adjoint in $H$ and has a discrete spectrum $\sigma(A).$ A calculation
shows that the eigenvalues of $A$ are given by%
\begin{equation}
\lambda_{m}:=\left(  \frac{(2m-1)\pi}{b-a}\right)  ^{2}+k\quad(m\in
\mathbb{N}). \label{Eigenvalues of A}%
\end{equation}
For $m\in\mathbb{N},$ the general solution of $\ell\lbrack f](x)=\lambda
_{m}f(x)$ on $[a,b]$ is%
\[
f_{m}(x)=c_{m,1}\cos\left(  \frac{(2m-1)\pi}{b-a}x\right)  +c_{m,2}\sin\left(
\frac{(2m-1)\pi}{b-a}x\right)  .
\]
Furthermore, with%

\begin{equation}
\left\{
\begin{array}
[c]{c}%
y_{m,1}(x)=\cos\left(  \frac{(2m-1)\pi}{b-a}x\right)  \quad(m\in\mathbb{N})\\
y_{m,2}(x)=\sin\left(  \frac{(2m-1)\pi}{b-a}x\right)  \quad(m\in\mathbb{N}),
\end{array}
\right.  \label{Eigenfunctions for positive m}%
\end{equation}
calculations show that $\allowbreak$%
\begin{equation}
\left\Vert y_{m,j}\right\Vert _{L^{2}[a,b]}=\sqrt{\frac{b-a}{2}}\quad
(m\in\mathbb{N};\text{ }j=1,2). \label{Norms of eigenfunctions}%
\end{equation}
Consequently,
\begin{equation}
E=\{e_{m}\mid m\in\mathbb{N}\}=\{z_{m,1}\}_{m\in\mathbb{N}}\cup\{z_{m,2}%
\}_{m\in\mathbb{N}}, \label{Orthonormal basis in L^2[a,b]}%
\end{equation}
where%
\begin{align}
z_{m,1}(x)  &  =\sqrt{\dfrac{2}{b-a}}\cos\left(  \frac{(2m-1)\pi}%
{b-a}x\right)  \quad(m\in\mathbb{N})\label{Orthonormal vectors}\\
z_{m,2}(x)  &  =\sqrt{\dfrac{2}{b-a}}\sin\left(  \frac{(2m-1)\pi}%
{b-a}x\right)  \quad(m\in\mathbb{N}) \label{Orthonormal vectors2}%
\end{align}
is a complete orthonormal basis in $L^{2}[a,b].$

For later purposes, we observe that
\begin{equation}
z_{m,j}^{(r)}(a)=-z_{m,j}^{(r)}(b)\quad(j=1,2;r=0,1,\ldots).
\label{Derivatives of Eigenfunctions of A}%
\end{equation}
We remind the reader of the classical expansion theorem (see \cite[Chapter
4]{Rudin}) for functions $f\in L^{2}[a,b]$ in terms of the eigenfunctions of
$A$; that is, the classical Fourier series expansion theorem in $L^{2}[a,b]$.

\begin{theorem}
\label{Convergence of Fourier Series in L^[a,b]}Let $f\in L^{2}[a,b]$; for
each $N\in\mathbb{N}$, define the partial sums
\[
s_{N}(f)(x)=\sum_{m=1}^{N}a_{m}(f)\cos\left(  \frac{(2m-1)\pi}{b-a}x\right)
+\sum_{m=1}^{N}b_{m}(f)\sin\left(  \frac{(2m-1)\pi}{b-a}x\right)  \quad
(x\in\lbrack a,b]),
\]
where%
\begin{equation}
a_{m}(f):=(f,z_{m,1})=\sqrt{\frac{2}{b-a}}\int_{a}^{b}f(x)\cos\left(
\frac{(2m-1)\pi}{b-a}x\right)  dx\quad(m\in\mathbb{N}), \label{a_m(f)}%
\end{equation}
and%
\begin{equation}
b_{m}(f):=(f,z_{m,2})=\sqrt{\frac{2}{b-a}}\int_{a}^{b}f(x)\sin\left(
\frac{(2m-1)\pi}{b-a}x\right)  dx\quad(m\in\mathbb{N}) \label{b_m(f)}%
\end{equation}
are the Fourier coefficients of $f$ corresponding to the orthonormal basis $E$
defined in $($\ref{Orthonormal basis in L^2[a,b]}$).$ Then
\[
\left\Vert f-s_{N}(f)\right\Vert _{L^{2}[a,b]}\rightarrow0\text{ as
}N\rightarrow\infty,
\]
and%
\[
\left\Vert f\right\Vert _{L^{2}[a,b]}^{2}=\sum_{m=0}^{\infty}\left\vert
a_{m}(f)\right\vert ^{2}+\sum_{m=1}^{\infty}\left\vert b_{m}(f)\right\vert
^{2}.
\]

\end{theorem}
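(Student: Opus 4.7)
The plan is to reduce both assertions to the single statement that the orthonormal set $E$ from \eqref{Orthonormal basis in L^2[a,b]} is complete in $H=L^{2}[a,b]$. Once completeness is in hand, both the $L^{2}$-convergence $\|f-s_{N}(f)\|_{L^{2}[a,b]}\to0$ and Parseval's identity follow immediately from abstract Hilbert-space theory: for any complete orthonormal system $\{e_{n}\}$ in a Hilbert space, the partial sums $\sum_{n}(f,e_{n})e_{n}$ converge to $f$ in norm, and $\|f\|^{2}=\sum_{n}|(f,e_{n})|^{2}$. The orthonormality of $E$ itself is a short direct computation using \eqref{Norms of eigenfunctions} and elementary trigonometric identities, so the only real content is the completeness.

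The cleanest route to completeness within the operator-theoretic framework already in use is as follows. The operator $A$ of \eqref{Definition of A} is a regular self-adjoint second-order Sturm--Liouville operator on the compact interval $[a,b]$, bounded below by $kI$ with $k>0$; in particular $0\in\rho(A)$ and $A^{-1}$ is a bounded integral operator given by a continuous Green's kernel. Such an operator is Hilbert--Schmidt, hence compact; equivalently, $A$ has compact resolvent (cf.\ \cite{Naimark},\cite{Zettl}). The spectral theorem for self-adjoint operators with compact resolvent then yields an orthonormal basis of $H$ consisting of eigenfunctions of $A$. The explicit calculation in \eqref{Eigenvalues of A}--\eqref{Orthonormal vectors2} shows that every eigenvalue of $A$ is one of the $\lambda_{m}$ and each eigenspace is two-dimensional, spanned by $y_{m,1}$ and $y_{m,2}$; therefore the normalized family $E$ exhausts the eigenfunctions and is itself a complete orthonormal basis of $H$.

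A more elementary, classical alternative (which is essentially what \cite[Chapter 4]{Rudin} provides) is to extend $f\in L^{2}[a,b]$ to $\tilde{f}\in L^{2}[a,2b-a]$ by the antisymmetric reflection $\tilde{f}(x):=-f(x-(b-a))$ for $x\in[b,2b-a]$ and then periodically to $\mathbb{R}$ with period $2(b-a)$. The antiperiodicity $\tilde{f}(x+(b-a))=-\tilde{f}(x)$ forces the classical Fourier expansion of $\tilde{f}$ on the doubled interval to involve only the odd frequencies $(2m-1)\pi/(b-a)$, since $\cos(n\pi(x+(b-a))/(b-a))=(-1)^{n}\cos(n\pi x/(b-a))$ and similarly for sine; the classical $L^{2}$-convergence then restricts to $[a,b]$ to give the desired statement. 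The main (modest) obstacle in either approach is purely bookkeeping: verifying that no eigenfunction of $A$ has been overlooked in the operator-theoretic route, or equivalently that the odd-frequency subsystem is genuinely complete inside the full Fourier basis on the doubled interval. Both verifications are short and standard.
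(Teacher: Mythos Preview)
Your proposal is correct and in fact supplies considerably more detail than the paper itself does: the paper does not prove this theorem at all but simply asserts that $E$ is a complete orthonormal basis in $L^{2}[a,b]$ (relying on the standard regular Sturm--Liouville theory already invoked via \cite{Naimark} and \cite{Zettl}) and then refers the reader to \cite[Chapter~4]{Rudin} for the abstract Hilbert-space expansion and Parseval identity. Your first route---compact resolvent of $A$ plus the spectral theorem to get completeness, followed by the general orthonormal-basis expansion---is exactly the argument implicit in the paper's citations, so there is nothing to correct.

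Your second route via antiperiodic extension to the doubled interval is a genuinely different, more elementary alternative that avoids operator theory altogether; it is not mentioned in the paper but is a standard and perfectly valid way to see why only the odd frequencies survive. The only minor bookkeeping point worth flagging is that the classical Fourier basis on $[a,2b-a]$ is naturally written in the shifted variable $x-a$, so one should observe that the span of $\cos\!\bigl(n\pi(x-a)/(b-a)\bigr)$, $\sin\!\bigl(n\pi(x-a)/(b-a)\bigr)$ for fixed $n$ coincides with that of $\cos(n\pi x/(b-a))$, $\sin(n\pi x/(b-a))$; after that the antiperiodicity argument goes through exactly as you describe.
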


For $f\in\mathcal{D}(A),$ we see from integration by parts and the boundary
conditions in (\ref{Definition of A}) that%
\begin{align*}
(Af,f)_{L^{2}[a,b]}  &  =\int_{a}^{b}\left[  -f^{\prime\prime}%
(x)+kf(x)\right]  \overline{f}(x)dx\\
&  =-f^{\prime}(x)\overline{f}(x)\mid_{a}^{b}+\int_{a}^{b}\left[  \left\vert
f^{\prime}(x)\right\vert ^{2}+k\left\vert f(x)\right\vert ^{2}\right]  dx\\
&  =\int_{a}^{b}\left[  \left\vert f^{\prime}(x)\right\vert ^{2}+k\left\vert
f(x)\right\vert ^{2}\right]  dx\\
&  \geq k\int_{a}^{b}\left\vert f(x)\right\vert ^{2}dx=k(f,f)_{L^{2}[a,b]};
\end{align*}
that is, $A$ is bounded below by $kI$ in $H.$ Consequently, the left-definite
theory discussed in the last section can be applied to this operator $A.$ This
is done in the next section.

\section{The Left-Definite Spaces and Operators Associated with $(L^{2}%
[a,b],A)$\label{LD Spaces and Operators}}

Let the self-adjoint differential operator $A$ in $H=L^{2}[a,b]$ be defined by
(\ref{Definition of A}). In this section we use the theory given in Section
\ref{Left-definite review} to explicitly construct the left-definite spaces
$H_{n}$ and the left-definite operators $A_{n},$ associated with the pair
$(H,A),$ for all positive integer values of $n.$ As can be easily established
(see \cite{LZ}), we note for each $n\in\mathbb{N}$ that
\begin{equation}
\ell^{n}[y]=\ell\lbrack\ell^{n-1}[y]]=\sum_{j=0}^{n}(-1)^{j}\binom{n}%
{j}k^{n-j}y^{(2j)}. \label{Powers of Fourier expression}%
\end{equation}

\begin{definition}
\label{Definition of Left-Definite Spaces and Inner products}For
$n\in\mathbb{N},$ define

\begin{enumerate}
\item[(i)] $V_{n}:=\{f:[a,b]\rightarrow\mathbb{C\mid}f^{(j)}\in AC[a,b]$
$(j=0,1,\ldots,n-1);f^{(j)}(a)=-f^{(j)}(b)$ $(j=0,1,\ldots,n-1);f^{(n)}\in
L^{2}[a,b]\};$

\item[(ii)] $(f,g)_{n}:=\sum_{j=0}^{n}\binom{n}{j}k^{n-j}\int_{a}^{b}%
f^{(j)}(x)\overline{g}^{(j)}(x)dx\quad(f,g\in V_{n});$

\item[(iii)] $\left\|  f\right\|  _{n}:=(f,f)_{n}^{1/2};$

\item[(iv)] $H_{n}:=(V_{n},(\cdot,\cdot)_{n}).$
\end{enumerate}
\end{definition}

\begin{remark}
We note that $V_{n}$ is a vector subspace of $L^{2}[a,b]$. Furthermore, since
$k>0,$ it is clear that $(\cdot,\cdot)_{n}$ is an inner product on
$V_{n}\times V_{n}$.
\end{remark}

\begin{remark}
Notice that the inner product $(\cdot,\cdot)_{n}$ is generated by the $n^{th}$
integral power $\ell^{n}[\cdot]$ of the differential expression $\ell
\lbrack\cdot];$ indeed, see $($\ref{Powers of Fourier expression}$)$ and item
$(5)$ in Definition
\ref{Definition of Left-Definite Spaces and Inner products}.
\end{remark}

\begin{remark}
\label{Eigenfunctions in H_n}From $($\ref{Derivatives of Eigenfunctions of A}%
$),$ we see that $E,$ the set of orthonormal eigenfunctions of $A$ given in
$($\ref{Orthonormal basis in L^2[a,b]}$),$ is contained in $H_{n}$ for each
$n\in\mathbb{N}.$ In Theorem 4.5 below we show that $E$ is a complete
orthogonal set in each space $H_{n}.$ Theorem \ref{H_n - LD space} shows that
$H_{n}$ is the $n^{th}$ left-definite space associated with the pair $(H,A).$
\end{remark}

\begin{theorem}
\label{Completeness of H_n}For each $n\in\mathbb{N},$ the space $H_{n},$
defined in Definition
\ref{Definition of Left-Definite Spaces and Inner products}, is a Hilbert space.
\end{theorem}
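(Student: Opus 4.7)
The plan is to show $H_n$ is complete by taking an arbitrary Cauchy sequence $\{f_m\}_{m=1}^{\infty} \subset H_n$, extracting a candidate limit $g_0 \in V_n$ from the $L^2$-limits of the derivatives $f_m^{(j)}$, and then verifying both that $g_0$ lies in $V_n$ and that $\|f_m - g_0\|_n \to 0$.

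First, because $k>0$ and every binomial coefficient is positive, the identity $\|h\|_n^2 = \sum_{j=0}^{n}\binom{n}{j}k^{n-j}\|h^{(j)}\|_{L^2[a,b]}^2$ gives $\|h^{(j)}\|_{L^2[a,b]} \leq C_{n,j,k}\|h\|_n$ for every $h \in V_n$ and each $j \in \{0,1,\ldots,n\}$. Thus each $\{f_m^{(j)}\}_{m=1}^{\infty}$ is Cauchy in $L^2[a,b]$, and completeness of $L^2[a,b]$ yields $g_0, g_1, \ldots, g_n \in L^2[a,b]$ with $f_m^{(j)} \to g_j$ in $L^2[a,b]$.

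Next, I would upgrade this to uniform convergence of $f_m^{(j)}$ for $j = 0, \ldots, n-1$, simultaneously showing $g_j$ admits an absolutely continuous representative with $g_j' = g_{j+1}$ a.e. Since $f_m^{(j)} \in AC[a,b]$, the fundamental theorem of calculus gives $f_m^{(j)}(x) = f_m^{(j)}(a) + \int_a^x f_m^{(j+1)}(t)\,dt$, and the anti-periodic condition $f_m^{(j)}(a) = -f_m^{(j)}(b)$ yields $2 f_m^{(j)}(a) = -\int_a^b f_m^{(j+1)}(t)\,dt$. Cauchy--Schwarz on the compact interval $[a,b]$ converts $L^2$-convergence of $f_m^{(j+1)}$ into $L^1$-convergence, so $f_m^{(j)}(a) \to c_j := -\tfrac{1}{2}\int_a^b g_{j+1}(t)\,dt$ and $\int_a^x f_m^{(j+1)}(t)\,dt \to \int_a^x g_{j+1}(t)\,dt$ uniformly in $x \in [a,b]$. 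Therefore $f_m^{(j)}$ converges uniformly to the AC function $\widetilde{g}_j(x) := c_j + \int_a^x g_{j+1}(t)\,dt$; by uniqueness of $L^2$-limits $\widetilde{g}_j = g_j$ a.e., and after selecting this representative we have $g_0^{(j)} = g_j \in AC[a,b]$ for $j = 0, \ldots, n-1$ and $g_0^{(n)} = g_n \in L^2[a,b]$.

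Finally, uniform convergence on $[a,b]$ lets me pass to the limit in $f_m^{(j)}(a) = -f_m^{(j)}(b)$ to obtain $g_0^{(j)}(a) = -g_0^{(j)}(b)$ for $j = 0, \ldots, n-1$, placing $g_0$ in $V_n$. Norm convergence follows directly from the definition of $\|\cdot\|_n$, since $\|f_m - g_0\|_n^2 = \sum_{j=0}^{n}\binom{n}{j}k^{n-j}\|f_m^{(j)} - g_j\|_{L^2[a,b]}^2 \to 0$. The genuinely delicate step is the upgrade from componentwise $L^2$-convergence of the derivatives to the joint AC and boundary structure required to place the limit in $V_n$; the anti-periodic conditions are not an obstruction but precisely the hypothesis that pins down $f_m^{(j)}(a)$ as an integral of $f_m^{(j+1)}$, forcing the requisite uniform convergence of $f_m^{(j)}$.
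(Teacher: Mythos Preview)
Your proof is correct and follows essentially the same approach as the paper: extract $L^{2}$-limits of all derivatives, use the fundamental theorem of calculus together with the anti-periodic boundary condition to pin down $f_m^{(j)}(a)$ and upgrade to uniform convergence, identify the resulting absolutely continuous function with the $L^{2}$-limit, and iterate down to $j=0$. The only differences are cosmetic---your indexing convention ($f_m^{(j)}\to g_j$ rather than the paper's $f_m^{(j)}\to g_{n-j}$) and your explicit invocation of uniform convergence make the argument slightly cleaner, but the substance is identical.
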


\begin{proof}
Suppose $\{f_{m}\}\subset H_{n}$ is Cauchy so%

\begin{equation}%
\begin{split}
\Vert f_{m}-f_{r}\Vert_{n}^{2}  &  =(f_{m}-f_{r},f_{m}-f_{r})_{n}^{2}\\
&  =\sum_{j=0}^{n}{\binom{n}{j}}k^{n-j}\int_{a}^{b}|f_{m}^{(n-j)}%
(t)-f_{r}^{(n-j)}(t)|^{2}\,dt\\
&  =\sum_{j=0}^{n}{\binom{n}{j}}k^{n-j}\Vert f_{m}^{(j)}-f_{r}^{(j)}%
\Vert_{L^{2}[a,b]}^{2}\\
&  \rightarrow0\text{\quad as }m,r\rightarrow\infty.
\end{split}
\label{eq1}%
\end{equation}
For $0\leq j\leq n,$ $\left\Vert f_{m}^{(j)}-f_{r}^{(j)}\right\Vert
_{L^{2}[a,b]}^{2}\leq\left\Vert f_{m}-f_{r}\right\Vert _{n}^{2}$ so
$\{f_{m}^{(j)}\}$ is Cauchy in $L^{2}[a,b].$ From the completeness of
$L^{2}[a,b],$ there exists $g_{j}\in L^{2}[a,b]$ such that%
\begin{equation}
f_{m}^{(j)}\rightarrow g_{n-j}\text{ in }L^{2}[a,b]\quad(j=0,1,\ldots,n).
\label{Completeness-1}%
\end{equation}
In particular,
\[
f_{m}^{(n)}\rightarrow g_{0}\text{ in }L^{2}[a,b].
\]
By H\"{o}lder's inequality, $f_{m}^{(n)}\rightarrow g_{0}$ in $L^{1}[a,b];$
moreover,
\begin{equation}
f_{m}^{(n-1)}(x)-f_{m}^{(n-1)}(a)=\int_{a}^{x}f_{m}^{(n)}(t)dt\rightarrow
\int_{a}^{x}g_{0}(t)dt\quad(x\in\lbrack a,b]) \label{Completeness-2}%
\end{equation}
and, in particular,
\[
f_{m}^{(n-1)}(b)-f_{m}^{(n-1)}(a)=\int_{a}^{b}f_{m}^{(n)}(t)dt\rightarrow
\int_{a}^{b}g_{0}(t)dt.
\]
Since $f_{m}^{(n-1)}(b)=-f_{m}^{(n-1)}(a),$ the above identity is rewritten as
$-2f_{m}^{(n-1)}(a)\rightarrow\int_{a}^{b}g_{0}(t)dt$ so that%
\begin{equation}
f_{m}^{(n-1)}(a)\rightarrow-\frac{1}{2}\int_{a}^{b}g_{0}(t)dt:=A_{0}.
\label{Completeness-3}%
\end{equation}
Combining (\ref{Completeness-2})\ and (\ref{Completeness-3}), we see that%
\begin{equation}
f_{m}^{(n-1)}(x)\rightarrow A_{0}+\int_{a}^{x}g_{0}(t)dt\quad(x\in\lbrack
a,b]). \label{Completeness-4}%
\end{equation}
From (\ref{Completeness-1}), with $j=n-1,$ we find
\begin{equation}
f_{m}^{(n-1)}\rightarrow g_{1}\text{ in }L^{2}[a,b]. \label{Completeness-5}%
\end{equation}
Combining (\ref{Completeness-4}) and (\ref{Completeness-5}), we see that%
\begin{equation}
g_{1}(x)=A_{0}+\int_{a}^{x}g_{0}(t)dt\quad(x\in\lbrack a,b]).
\label{Completeness-6}%
\end{equation}
Observe that

\begin{enumerate}
\item[(i)] $g_{1}\in AC[a,b],$

\item[(ii)] $g^{\prime}=g_{0},$

\item[(iii)] $g_{1}(a)=A_{0},$

\item[(iv)] $g_{1}(b)=A_{0}+\int_{a}^{b}g_{0}(t)dt=A_{0}-2A_{0}=-A_{0}%
=-g_{1}(a).$
\end{enumerate}

It is helpful to consider the next iteration in the construction of the limit
function $f$ of the Cauchy sequence $\{f_{m}\}$ in $H_{2}.$ Repeating the
above analysis, we see that%
\begin{equation}
g_{2}(x)=A_{1}+\int_{a}^{x}g_{1}(t)dt, \label{Completeness-7}%
\end{equation}
where%
\begin{equation}
A_{1}=-\frac{1}{2}\int_{a}^{b}g_{1}(t)dt. \label{Completeness-8}%
\end{equation}
From (\ref{Completeness-7}), (\ref{Completeness-8}), and the above
calculations, we see that

\begin{enumerate}
\item[(I)] $g_{2}\in AC[a,b]$

\item[(II)] $g_{2}^{\prime}=g_{1}\in AC[a,b]$

\item[(III)] $g_{2}^{\prime\prime}=g_{1}^{\prime}=g_{0}$

\item[(IV)] $g_{2}(a)=A_{1}$

\item[(V)] $g_{2}(b)=A_{1}+\int_{a}^{b}g_{1}(t)dt=-A_{1}=-g_{2}(a);$

\item[(VI)] $g_{2}^{\prime}(b)=g_{1}(b)=-g_{1}(a)=-g_{2}^{\prime}(a).$
\end{enumerate}

\noindent Repeating this argument $i$ times, where $1\leq i\leq n,$ the above
procedure shows that

\begin{enumerate}
\item[(a)] $g_{i}(x)=A_{i-1}+\int_{a}^{x}g_{i-1}(t)dt$

\item[(b)] $g_{i}\in AC[a,b]$

\item[(c)] $g_{i}^{\prime}=g_{i-1},$ $g_{i}^{\prime\prime}=g_{i-2}%
,\ldots,g_{i}^{(i)}=g_{0}$

\item[(d)] $g_{i},g_{i-1}^{\prime},\ldots,g_{i}^{(i-1)}\in AC[a,b]$

\item[(e)] $g_{i}(b)=-g_{i}(a);$ $g_{i}^{\prime}(b)=-g_{i}^{\prime}%
(a);\ldots,g_{i}^{(i-1)}(b)=-g_{i}^{(i-1)}(a).$
\end{enumerate}

\noindent Choosing $i=n$ above, we see that the function $f:=g_{n}\in H_{n}.$
Moreover, from (a)-(e) and (\ref{Completeness-1}), we see that%
\begin{align*}
\left\Vert f_{m}-f\right\Vert _{n}^{2}  &  =\sum_{j=0}^{n}\binom{n}{j}%
k^{n-j}\left\Vert f_{m}^{(j)}-f^{(j)}\right\Vert _{L^{2}[a,b]}^{2}\\
&  =\sum_{j=0}^{n}\binom{n}{j}k^{n-j}\left\Vert f_{m}^{(j)}-g_{n-j}\right\Vert
_{L^{2}[a,b]}^{2}\\
&  \rightarrow0\text{ as }m\rightarrow\infty.
\end{align*}
Thus, $H_{n}=(V_{n},(\cdot,\cdot)_{n})$ is a Hilbert space.
\end{proof}

Observe, for $f\in H_{n},$%
\begin{equation}
\left\Vert f\right\Vert _{n}\geq k^{n}\left\Vert f\right\Vert _{L^{2}[a,b]}.
\label{Inequality}%
\end{equation}

Recall that, for $j=1,2$ and $m\in\mathbb{N},$ the trigonometric functions
$z_{m,j},$ defined in (\ref{Orthonormal vectors}) and
(\ref{Orthonormal vectors2}), are orthonormal eigenfunctions in $L^{2}[a,b]$
associated with the eigenvalue $\lambda_{m}$ defined in
(\ref{Eigenvalues of A}). From (\ref{Derivatives of Eigenfunctions of A}),
(\ref{Powers of Fourier expression}), integration by parts, and the definition
of $V_{n},$ we see that, for $j=1,2$ and $f\in H_{n},$%

\begin{equation}%
\begin{split}
\lambda_{m}^{n}(z_{m,j},f)_{L^{2}[a,b]}  &  =(A^{n}z_{m,j},f)_{L^{2}%
[a,b]}=\int_{a}^{b}\ell^{n}[z_{m,j}](x)\overline{f}(x)\,dx\\
&  =\sum_{r=0}^{n}(-1)^{r}{\binom{n}{r}}k^{n-r}\int_{a}^{b}z_{m,j}%
^{(2r)}(x)\bar{f}(x)\,dx\\
&  =\sum_{r=0}^{n}(-1)^{r}{\binom{n}{r}}k^{n-r}\left[  \left.  \sum
_{s=0}^{r-1}(-1)^{s}z_{m,j}^{(2r-1-s)}(x)\overline{f}^{(s)}(x)\right\vert
_{a}^{b}+(-1)^{r}\int_{a}^{b}z_{m,j}^{(r)}(x)\bar{f}^{(r)}(x)\,dx\right] \\
&  =\sum_{r=0}^{n}\binom{n}{r}k^{n-r}\int_{a}^{b}z_{m,j}^{(r)}(x)\overline
{f}^{(r)}(x)dx\\
&  =(z_{m,j},f)_{n}.
\end{split}
\label{Fundamental Relation}%
\end{equation}

In particular, if we set $f=z_{r,i}$ in (\ref{Fundamental Relation}), we see
that%
\begin{equation}
(z_{m,j},z_{r,i})_{n}=\lambda_{m}^{n}(z_{m,j},z_{r,i})_{L^{2}[a,b]}%
=\lambda_{m}^{n}\delta_{m,r}\quad(i,j=1,2;m,r\in\mathbb{N}).
\label{Fundamental Relation 2}%
\end{equation}
From Theorem \ref{Convergence of Fourier Series in L^[a,b]}, the
orthonormality in $L^{2}[a,b]$ of the functions $E=\{z_{m,1}\}_{m=1}^{\infty
}\cup\{z_{m,2}\}_{m=1}^{\infty}$, and (\ref{Fundamental Relation 2}), we
obtain the following theorem.

\begin{theorem}
\label{Orthogonality of E in H_n}For each $n\in\mathbb{N},$ the set
\begin{equation}
E_{n}:=\{Z_{m,n,1}\}_{m\in\mathbb{N}}\cup\{Z_{m,n,2}\}_{m\in\mathbb{N}},
\label{E_n}%
\end{equation}
where%
\begin{equation}
\left\{
\begin{array}
[c]{l}%
Z_{m,n,1}(x)=\sqrt{\dfrac{2}{b-a}}\dfrac{1}{\sqrt{\left(  \left(
\frac{(2m-1)\pi}{b-a}\right)  ^{2}+k\right)  ^{n}}}\cos\left(  \frac
{(2m-1)\pi}{b-a}x\right)  =\lambda_{m}^{-n/2}z_{m,1}(x)\\
Z_{m,n,2}(x)=\sqrt{\dfrac{2}{b-a}}\dfrac{1}{\sqrt{\left(  \left(
\frac{(2m-1)\pi}{b-a}\right)  ^{2}+k\right)  ^{n}}}\sin\left(  \frac
{(2m-1)\pi}{b-a}x\right)  =\lambda_{m}^{-n/2}z_{m,2}(x)
\end{array}
\right.  \label{Z_1 and Z_2}%
\end{equation}
forms an orthonormal set in $H_{n}.$
\end{theorem}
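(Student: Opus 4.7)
The plan is to derive the theorem as a direct consequence of identity (\ref{Fundamental Relation 2}), established in the calculation immediately preceding the statement, by verifying that the scalar factor $\lambda_{m}^{-n/2}$ built into the definition of $Z_{m,n,j}$ is precisely the normalizer converting $L^{2}[a,b]$-orthonormality of $z_{m,j}$ into $H_{n}$-orthonormality of $Z_{m,n,j}$.

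First I would combine (\ref{Fundamental Relation 2}) with the $L^{2}[a,b]$-orthonormality of $E = \{z_{m,1}\}_{m\in\N}\cup\{z_{m,2}\}_{m\in\N}$ recorded in (\ref{Orthonormal basis in L^2[a,b]}) to read off
\begin{equation*}
(z_{m,j},z_{r,i})_{n} \;=\; \lambda_{m}^{n}\,(z_{m,j},z_{r,i})_{L^{2}[a,b]} \;=\; \lambda_{m}^{n}\,\delta_{m,r}\,\delta_{i,j}
\end{equation*}
for all $m,r\in\N$ and $i,j\in\{1,2\}$. Next I would expand $(Z_{m,n,j},Z_{r,n,i})_{n}$ using the sesquilinearity of $(\cdot,\cdot)_{n}$ together with the definition $Z_{m,n,j} = \lambda_{m}^{-n/2}\,z_{m,j}$:
\begin{equation*}
(Z_{m,n,j},Z_{r,n,i})_{n} \;=\; \lambda_{m}^{-n/2}\,\lambda_{r}^{-n/2}\,(z_{m,j},z_{r,i})_{n} \;=\; \lambda_{m}^{-n/2}\,\lambda_{r}^{-n/2}\,\lambda_{m}^{n}\,\delta_{m,r}\,\delta_{i,j}.
\end{equation*}
When $m=r$ and $i=j$ the product $\lambda_{m}^{-n/2}\lambda_{m}^{-n/2}\lambda_{m}^{n}$ collapses to $1$; otherwise one of the Kronecker deltas forces the expression to vanish. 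Hence $(Z_{m,n,j},Z_{r,n,i})_{n} = \delta_{m,r}\,\delta_{i,j}$, which is precisely the desired orthonormality of $E_{n}$ in $H_{n}$.

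There is no serious obstacle: the theorem amounts to a bookkeeping check once identity (\ref{Fundamental Relation 2}) is in hand. The only mildly subtle ingredient is the $L^{2}[a,b]$-orthogonality of $z_{m,1}$ with $z_{m,2}$ at the same frequency index $m$, which on an arbitrary interval is not automatic but holds here because the identity $\frac{2(2m-1)\pi}{b-a}(b-a) = 2(2m-1)\pi$ makes the antiderivative of $\sin(2\omega_{m}x)$ (with $\omega_{m} = (2m-1)\pi/(b-a)$) return to the same value at $a$ and $b$. This fact is already absorbed into the statement that $E$ is an orthonormal basis of $L^{2}[a,b]$, so I would simply cite (\ref{Orthonormal basis in L^2[a,b]}) rather than rederive it.
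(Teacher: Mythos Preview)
Your proposal is correct and follows essentially the same approach as the paper: the paper derives the theorem in one sentence from the orthonormality of $E$ in $L^{2}[a,b]$ together with identity (\ref{Fundamental Relation 2}), and your argument simply makes that deduction explicit by writing out the sesquilinear expansion $(Z_{m,n,j},Z_{r,n,i})_{n}=\lambda_{m}^{-n/2}\lambda_{r}^{-n/2}(z_{m,j},z_{r,i})_{n}$. Your added remark about the same-frequency orthogonality of $z_{m,1}$ and $z_{m,2}$ on a general interval $[a,b]$ is a correct clarification of a point the paper absorbs into the assertion that $E$ is orthonormal.
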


Later in this section (see Theorem \ref{H_n - LD space}, part (3)), we prove
that $E_{n}$ is, in fact, a \textit{complete} orthonormal set in $H_{n}$ for
each $n\in\mathbb{N}.$

For later purposes, we need the following equality involving finite linear
combinations of eigenfunctions of $A$ - the so-called \textit{trigonometric
polynomials}$.$ Let $N_{1},M_{1},N,M\in\mathbb{N}$ with $N_{1}\leq N$ and
$M_{1}\leq M$ and let $\alpha_{m},\beta_{r}\in\mathbb{C}$ $(m=N_{1},\ldots,N;$
$r=M_{1},\ldots,M).$ Suppose%
\[
p(x)=\sum_{m=N_{1}}^{N}\alpha_{m}e_{m}(x),\text{ }q(x)=\sum_{r=M_{1}}^{M}%
\beta_{r}e_{r}(x),
\]
where each $e_{m}\in E$, defined in (\ref{Orthonormal basis in L^2[a,b]}).
Then $p,q\in H_{n}$ for all $n\in\mathbb{N}$ and, by
(\ref{Fundamental Relation}) and linearity, we see that%
\begin{align}
(A^{n}p,q)_{L^{2}[a,b]}  &  =\sum_{m=N_{1}}^{N}\sum_{r=M_{1}}^{M}\alpha
_{m}\overline{\beta}_{r}(A^{n}e_{m},e_{r})_{L^{2}[a,b]}\nonumber\\
&  =\sum_{m=N_{1}}^{N}\sum_{r=M_{1}}^{M}\alpha_{m}\overline{\beta}_{r}%
(e_{m},e_{r})_{n}\text{ }\label{Trigonometric Polynomial Identity}\\
&  =(\sum_{m=N_{1}}^{N}\alpha_{m}e_{m},\sum_{r=M_{1}}^{M}\beta_{r}e_{r}%
)_{n}\nonumber\\
&  =(p,q)_{n}.\nonumber
\end{align}
We are now in position to prove the following main theorem.

\begin{theorem}
\label{H_n - LD space}For each $n\in\mathbb{N},$ let
\begin{equation}
H_{n}=(V_{n},(\cdot,\cdot)_{n}), \label{H_n definition}%
\end{equation}
where%
\begin{equation}
V_{n}:=\{f:[a,b]\rightarrow\mathbb{C}\mid f^{(j)}\in AC[a,b],\text{ }%
f^{(j)}(a)=-f^{(j)}(b)\text{ }(j=0,1,\ldots,n-1);f^{(n)}\in L^{2}[a,b]\},
\label{V_n definition}%
\end{equation}
and
\begin{equation}
(f,g)_{n}:=\sum_{j=0}^{n}\binom{n}{j}k^{n-j}\int_{a}^{b}f^{(j)}(x)\overline
{g}^{(j)}(x)dx\quad(f,g\in V_{n}). \label{IP definition}%
\end{equation}
Then $H_{n}$ is the $n^{th}$ left-definite space associated with the pair
$(H,A).$
\end{theorem}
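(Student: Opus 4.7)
The plan is to verify the five conditions in Definition \ref{Definition of rth LD space} for $r=n$ and then invoke the uniqueness assertion in Theorem \ref{E/U LD Theorem} to conclude that the space constructed in Definition \ref{Definition of Left-Definite Spaces and Inner products} really is the $n^{th}$ left-definite space. Condition (1) is already Theorem \ref{Completeness of H_n}, and condition (4) is the pointwise estimate $\|f\|_n \geq k^n \|f\|_{L^2[a,b]}$ recorded in (\ref{Inequality}). What remain are (2) $\mathcal{D}(A^n) \subset V_n$, (5) $(A^n f,g)_{L^2[a,b]} = (f,g)_n$ for $f \in \mathcal{D}(A^n)$ and $g \in V_n$, and (3) density of $\mathcal{D}(A^n)$ in $H_n$.

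For (2), I will argue by induction on $n$ that $f \in \mathcal{D}(A^n)$ forces $f^{(j)} \in AC[a,b]$ for $j = 0,1,\ldots,2n-1$, $f^{(2n)} \in L^2[a,b]$, and $f^{(j)}(a) = -f^{(j)}(b)$ for $j = 0,1,\ldots,2n-1$: the base case $n=1$ is built into $\mathcal{D}(A)$, and the inductive step applies the boundary conditions not only to $f$ but to $Af = -f'' + kf$, which—combined with the $n=1$ conditions already available on $f$—propagates the semi-periodic relations to $f''$ and $f'''$, and so on. Since AC functions on $[a,b]$ are automatically in $L^2[a,b]$, this characterization is strictly stronger than membership in $V_n$. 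For (5), given $f \in \mathcal{D}(A^n)$ and $g \in V_n$, I expand $(A^n f, g)_{L^2[a,b]}$ using (\ref{Powers of Fourier expression}) and integrate by parts $r$ times in the summand indexed by $r$, moving $r$ derivatives from $f^{(2r)}$ onto $g$. The resulting boundary terms are of the form $\bigl[f^{(2r-1-s)}\overline{g}^{(s)}\bigr]_a^b$ with $s \leq r-1 \leq n-1$ and $2r-1-s \leq 2n-1$; in every one of these, $f^{(2r-1-s)}(a) = -f^{(2r-1-s)}(b)$ and $g^{(s)}(a) = -g^{(s)}(b)$, so the product cancels between $a$ and $b$ exactly as in the calculation (\ref{Fundamental Relation}). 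The interior integrals collapse to $(f,g)_n$.

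The main obstacle is (3). I will show the stronger statement that trigonometric polynomials $p = \sum_{m=1}^N [\alpha_m z_{m,1} + \beta_m z_{m,2}]$ are dense in $H_n$; since such polynomials lie in $\mathcal{D}(A^n)$ (each $z_{m,j}$ is smooth and satisfies semi-periodic conditions to every order by (\ref{Derivatives of Eigenfunctions of A})), this suffices. Fix $f \in V_n$ and let $s_N(f) = \sum_{m=1}^N [a_m(f)z_{m,1} + b_m(f)z_{m,2}]$ be the $L^2$-Fourier partial sum from Theorem \ref{Convergence of Fourier Series in L^[a,b]}. Writing
\begin{equation*}
\|f - s_N(f)\|_n^2 = \sum_{j=0}^n \binom{n}{j} k^{n-j} \|f^{(j)} - s_N(f)^{(j)}\|_{L^2[a,b]}^2,
\end{equation*}
it is enough to prove $s_N(f)^{(j)} \to f^{(j)}$ in $L^2[a,b]$ for each $j = 0,1,\ldots,n$. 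For each such $j$ I apply integration by parts $j$ times to $(f^{(j)}, z_{m,i})_{L^2[a,b]}$: the regularity $f^{(0)}, \ldots, f^{(n-1)} \in AC[a,b]$ and the semi-periodic relations $f^{(r)}(a) = -f^{(r)}(b)$ for $r \leq n-1$ built into $V_n$—combined with the analogous relations for $z_{m,i}$—make every boundary term cancel, yielding $(f^{(j)},z_{m,i})_{L^2[a,b]} = (-1)^j (f, z_{m,i}^{(j)})_{L^2[a,b]}$. Since $z_{m,i}^{(j)}$ is a scalar multiple of $z_{m,1}$ or $z_{m,2}$ (depending on the parity of $j$), the right-hand side equals the corresponding scalar multiple of $a_m(f)$ or $b_m(f)$, which is precisely the $m^{th}$ coefficient appearing in $s_N(f)^{(j)}$. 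Thus $s_N(f)^{(j)}$ is the $N^{th}$ $L^2$-Fourier partial sum of $f^{(j)}$, and since $f^{(j)} \in L^2[a,b]$ for every $j \leq n$, Theorem \ref{Convergence of Fourier Series in L^[a,b]} delivers the required $L^2$-convergence.

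The hard part is really the bookkeeping in the last step: matching $s_N(f)^{(j)}$ with the genuine Fourier partial sum of $f^{(j)}$ requires tracking the sign $(-1)^j$ and the cos–sin swap produced by differentiating $z_{m,i}$ an odd number of times, and the case $j=n$ uses the boundary conditions of $V_n$ in full strength. Once (2), (3), (5) are in hand, the uniqueness in Theorem \ref{E/U LD Theorem} identifies $H_n$ with the abstract $n^{th}$ left-definite space associated with $(H,A)$.
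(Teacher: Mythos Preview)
Your proof is correct, but the route you take for conditions (2), (3), and (5) is genuinely different from the paper's. The paper never establishes the pointwise characterization $\mathcal{D}(A^{n})\subset V_{2n}$ that you obtain by induction; instead, for (2) it argues via Fourier series that the trigonometric partial sums $p_{j}$ of $f\in\mathcal{D}(A^{n})$ form a Cauchy sequence in $H_{n}$ (using the identity (\ref{Trigonometric Polynomial Identity})), and then identifies the $H_{n}$-limit with $f$ through (\ref{Inequality}). For (3) the paper takes the orthogonal-complement shortcut: if $(e_{m},f)_{n}=0$ for every $m$, the fundamental relation (\ref{Fundamental Relation}) forces $(e_{m},f)_{L^{2}[a,b]}=0$ and hence $f=0$. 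For (5) the paper approximates both $f$ and $g$ by trigonometric polynomials and passes to the limit in (\ref{Trigonometric Polynomial Identity}), rather than integrating by parts directly. Your approach buys something concrete: the induction in (2) actually proves $\mathcal{D}(A^{n})=V_{2n}$ with semi-periodic conditions to order $2n-1$, a fact the paper only recovers \emph{a posteriori} from Theorem~\ref{E/U LD Theorem}; and this extra regularity is exactly what lets you dispatch (5) by a single clean integration by parts. The paper's approach, on the other hand, is more economical in that it only ever integrates by parts against eigenfunctions (where the boundary behavior is automatic from (\ref{Derivatives of Eigenfunctions of A})), and so never needs the inductive bookkeeping. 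Your argument for (3)---showing that $s_{N}(f)^{(j)}$ is literally the $L^{2}$-Fourier partial sum of $f^{(j)}$---is a nice alternative to the paper's orthogonality argument and makes the convergence in $H_{n}$ completely explicit.
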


\begin{proof}
Let $n\in\mathbb{N}.$ We need to establish properties (1)-(5) in Definition
\ref{Definition of rth LD space}.\medskip\newline(i) \underline{$H_{n}$ is a
Hilbert space}\newline This is proved in Theorem \ref{Completeness of H_n}%
.\medskip\newline(ii) \underline{$\mathcal{D}(A^{n})\subset V_{n}$}\newline
Let $f\in\mathcal{D}(A^{n}).$ Since the set $E=\{e_{m}\mid m\in\mathbb{N}%
\}=\{z_{m,1}\}_{m=1}^{\infty}\cup\{z_{m,2}\}_{m=1}^{\infty}$ of eigenfunctions
of $A$ form a complete orthonormal set in $L^{2}[a,b],$ we see that%
\begin{equation}
p_{j}:=\sum_{m=0}^{j}c_{m}e_{m}\rightarrow f\text{ as }j\rightarrow
\infty\text{ in }L^{2}[a,b], \label{Main-0}%
\end{equation}
where $\{c_{m}\}$ are the Fourier coefficients of $f$ in $L^{2}[a,b],$ defined
by%
\[
c_{m}:=\int_{a}^{b}f(t)e_{m}(t)dt=(f,e_{m})_{L^{2}[a,b]}\quad(m\in
\mathbb{N}_{0}).
\]
Since $A^{n}f\in L^{2}[a,b],$ we also have%
\begin{equation}
\sum_{m=0}^{j}d_{m}e_{m}\rightarrow A^{n}f\text{ as }j\rightarrow\infty\text{
in }L^{2}[a,b], \label{Main-1}%
\end{equation}
where%
\[
d_{m}=(A^{n}f,e_{m})_{L^{2}[a,b]}\quad(m\in\mathbb{N}).
\]
With $\widetilde{\lambda}_{m}$ denoting the eigenvalue of $A$ associated with
$e_{m},$ we see from the self-adjointness of $A$ that%
\[
d_{m}=(A^{n}f,e_{m})_{L^{2}[a,b]}=(f,A^{n}e_{m})_{L^{2}[a,b]}%
=\widetilde{\lambda}_{m}^{n}(f,e_{m})_{L^{2}[a,b]}=\widetilde{\lambda}_{m}%
^{n}c_{m}.
\]
Substituting this identity into (\ref{Main-1}), and using the linearity of
$A^{n},$ we obtain%
\begin{equation}
A^{n}p_{j}\rightarrow A^{n}f\text{ as }j\rightarrow\infty\text{ in }%
L^{2}[a,b], \label{Main-2}%
\end{equation}
where $p_{j}$ is defined in (\ref{Main-0}). From
(\ref{Trigonometric Polynomial Identity}), (\ref{Main-0}), and (\ref{Main-2}),
it follows that%
\begin{align*}
\left\Vert p_{j}-p_{r}\right\Vert _{n}^{2}  &  =(A^{n}(p_{j}-p_{r}%
),p_{j}-p_{r})_{L^{2}[a,b]}\\
&  \rightarrow0\text{ as }j,r\rightarrow\infty;
\end{align*}
that is to say, $\{p_{j}\}_{j\in\mathbb{N}}$ is Cauchy in $H_{n}.$ From the
completeness of $H_{n},$ there exists $g\in V_{n}\subset L^{2}[a,b]$ such that%
\[
p_{j}\rightarrow g\text{ in }H_{n}.
\]
Furthermore, (\ref{Inequality}) shows us that
\[
\left\Vert p_{j}-g\right\Vert _{n}^{2}\geq k^{n}\left\Vert p_{j}-g\right\Vert
_{L^{2}[a,b]}^{2},
\]
so%
\begin{equation}
p_{j}\rightarrow g\text{ as }j\rightarrow\infty\text{ in }L^{2}[a,b].
\label{Main-3}%
\end{equation}
Comparing (\ref{Main-0}) and (\ref{Main-3}), we see that $f=g\in V_{n};$
consequently, $\mathcal{D}(A^{n})\subset V_{n}$ as required.\medskip
\newline(iii) \underline{$\mathcal{D}(A^{n})$ is dense in $H_{n}$}\newline
Since $E$ is contained in $\mathcal{D}(A^{n}),$ it suffices to show that $E$
is a complete orthogonal set in $H_{n}.$ From this, it will follow (see
\cite[Chapter 4]{Rudin}) that the vector subspace $T$ $\subset\mathcal{D}%
(A^{n})$ of all trigonometric polynomials (that is, all finite linear
combinations of elements from the set $E$ is dense in $H_{n}$ and,
consequently, $\mathcal{D}(A^{n})$ is dense in $H_{n}.$ To this end, suppose%
\[
(e_{m},f)_{n}=0\quad(m\in\mathbb{N}_{0})
\]
for some $f\in H_{n}.$ From (\ref{Fundamental Relation}), we see that%
\[
0=(e_{m},f)_{n}=(A^{n}e_{m},f)_{L^{2}[a,b]}=\widetilde{\lambda}_{m}^{n}%
(e_{m},f)_{L^{2}[a,b]},
\]
where $\widetilde{\lambda}_{m}>0$ is the eigenvalue associated with $e_{m}.$
It follows that%
\begin{equation}
(e_{m},f)_{L^{2}[a,b]}=0\quad(m\in\mathbb{N}_{0}). \label{Main-4}%
\end{equation}
As remarked in Section \ref{Section 3}, $E$ is a complete orthonormal set in
$L^{2}[a,b];$ consequently, (\ref{Main-4}) implies that $f=0$ in $L^{2}[a,b].$
From this, it is clear that $f=0$ in $H_{n},$ thereby completing the proof
that $E$ is a complete orthogonal set in $H_{n}.$ Consequently, we see that
$E_{n},$ defined in (\ref{E_n}), is a complete orthonormal set in
$H_{n}.\medskip$\newline(iv) \underline{$(f,f)_{n}\geq k^{n}(f,f)_{L^{2}%
[a,b]}$ for all $f\in V_{n}$}\newline This is clear from the definition of
$(\cdot,\cdot)_{n}:$%
\begin{align*}
(f,f)_{n}  &  =\sum_{j=0}^{n}\binom{n}{j}k^{n-j}\int_{a}^{b}\left\vert
f^{(j)}(x)\right\vert ^{2}dx\\
&  \geq k^{n}\int_{a}^{b}\left\vert f^{(j)}(x)\right\vert ^{2}dx=k^{n}%
(f,f)_{L^{2}[a,b]};
\end{align*}
\medskip\newline see also (\ref{Inequality}).$\medskip$\newline(v)
\underline{$(A^{n}f,g)_{L^{2}[a,b]}=(f,g)_{n}$ for all $f\in\mathcal{D}%
(A^{n})$ and $g\in V_{n}$}\newline Let $f\in\mathcal{D}(A^{n})$ and $g\in
V_{n}.$ From (\ref{Trigonometric Polynomial Identity}), we see that%
\begin{equation}
(A^{n}p,q)_{L^{2}[a,b]}=(p,q)_{n} \label{Main-5}%
\end{equation}
for all trigonometric polynomials $p$ and $q$ of the form%
\[
p=\sum_{m=1}^{N}\alpha_{m}e_{m},\text{ }q=\sum_{m=1}^{M}\beta_{m}e_{m}.
\]
From part (iii) of this proof, we know that the space $T$ of all trigonometric
polynomials is dense in $H_{n}.$ Hence there exists $\{p_{j}\}_{j\in
\mathbb{N}},\{q_{j}\}_{j\in\mathbb{N}}\subset T$ such that%
\begin{equation}
p_{j}\rightarrow f,\text{ }q_{j}\rightarrow g\text{ as }j\rightarrow
\infty\text{ in }H_{n}. \label{Main-6}%
\end{equation}
Since convergence in $H_{n}$ implies convergence in $L^{2}[a,b]$ (from part
(iii))), we see that%
\begin{equation}
p_{j}\rightarrow f,\text{ }q_{j}\rightarrow g\text{ as }j\rightarrow
\infty\text{ in }L^{2}[a,b]. \label{Main-7}%
\end{equation}
Moreover, from part (ii) of this proof, we see that%
\begin{equation}
A^{n}p_{j}\rightarrow A^{n}f\text{ as }j\rightarrow\infty\text{ in }%
L^{2}[a,b]. \label{Main-8}%
\end{equation}
Consequently, from (\ref{Main-5}), (\ref{Main-6}), (\ref{Main-7}), and
(\ref{Main-8}), we see that%
\[
(A^{n}f,g)_{L^{2}[a,b]}=\lim_{j\rightarrow\infty}(A^{n}p_{j},q_{j}%
)_{L^{2}[a,b]}=\lim_{j\rightarrow\infty}(p_{j},q_{j})_{n}=(f,g)_{n}.
\]
This completes the proof of (v) and the proof of the theorem.
\end{proof}

The following result, part of which is proved in step (iii) of the above
theorem, is the analogous result in each left-definite space $H_{n}$ of the
classical Fourier expansion theorem in $L^{2}[a,b]\ $stated in Theorem
\ref{Convergence of Fourier Series in L^[a,b]}. Note the identities in
(\ref{Fourier rel 1}) and (\ref{Fourier rel 2}); these formulae relate the
Fourier coefficients of $f$ relative to the orthonormal basis $E_{n}$ of
$H_{n}\subset L^{2}[a,b]$ to the Fourier coefficients of $f$ relative to the
orthonormal basis $E$ of $L^{2}[a,b]$.

\begin{theorem}
\label{Completeness of E in each H_n} $($Fourier Expansion Theorem in
Left-Definite Spaces$)$ For each $n\in\mathbb{N},$ let%
\[
E_{n}=\{Z_{m,n,1}\}_{m\in\mathbb{N}}\cup\{Z_{m,n,2}\}_{m\in\mathbb{N}}%
\]
be as in $($\ref{E_n}$)$ and $($\ref{Z_1 and Z_2}$).$ Then $E_{n}$ is a
complete orthonormal set in $H_{n}.$ Furthermore, for $f\in H_{n}\subset
L^{2}[a,b]$ and $N\in\mathbb{N},$ define the partial sums%
\[
S_{N,n}(f)(x)=\sum_{m=1}^{N}A_{m,n}(f)\cos\left(  \frac{(2m-1)\pi}%
{b-a}x\right)  +\sum_{m=1}^{N}B_{m,n}(f)\sin\left(  \frac{(2m-1)\pi}%
{b-a}x\right)  \quad(x\in\lbrack a,b]),
\]
where $\{A_{m,n}(f)\}_{m\in\mathbb{N}}$ and $\{B_{m,n}(f)\}_{m\in\mathbb{N}}$
are the Fourier coefficients of $f$ relative to $E_{n}$ defined by%
\begin{equation}
A_{m,n}(f):=(f,Z_{m,n,1})_{n}\quad(m\in\mathbb{N}) \label{A_(m,n)}%
\end{equation}
and%
\begin{equation}
B_{m,n}(f):=(f,Z_{m,n,2})_{n}\quad(m\in\mathbb{N}). \label{B_(m,n)}%
\end{equation}
Then

\begin{enumerate}
\item[(a)] $\left\|  f-S_{N,n}(f)\right\|  _{n}\rightarrow0$ as $N\rightarrow
\infty;$

\item[(b)] $\left\Vert f\right\Vert _{n}^{2}=\sum_{m=0}^{\infty}\left\vert
A_{m,n}(f)\right\vert ^{2}+\sum_{m=1}^{\infty}\left\vert B_{m,n}(f)\right\vert
^{2};$

\item[(c)]
\begin{align}
A_{m,n}(f)  &  =\lambda_{m}^{n/2}a_{m}(f)\quad(m\in\mathbb{N}%
)\label{Fourier rel 1}\\
B_{m,n}(f)  &  =\lambda_{m}^{n/2}b_{m}(f)\quad(m\in\mathbb{N}),
\label{Fourier rel 2}%
\end{align}
where $\{a_{m}(f)\}_{m\in\mathbb{N}}$ and $\{b_{m}(f)\}_{m\in\mathbb{N}}$ are
the Fourier coefficients of $f$, defined respectively in $($\ref{a_m(f)}$),$
and $($\ref{b_m(f)}$),$ relative to the orthonormal basis $E,$ given in
$($\ref{Orthonormal basis in L^2[a,b]}$),$ in $L^{2}[a,b]$ and where
$\{\lambda_{m}\}_{m\in\mathbb{N}}$ are the eigenvalues of $A$ defined in
$($\ref{Eigenvalues of A}$).$
\end{enumerate}
\end{theorem}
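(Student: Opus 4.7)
The plan is to assemble the result from ingredients that are, in essence, already available in the preceding material. First, I would note that $E_n$ is a \emph{complete} orthonormal set in the Hilbert space $H_n$: orthonormality was established in Theorem \ref{Orthogonality of E in H_n}, while completeness of $E$ (and hence of the rescaled set $E_n$) in $H_n$ was already proved in step (iii) of the proof of Theorem \ref{H_n - LD space}. Specifically, that argument showed that any $f \in H_n$ with $(e_m, f)_n = 0$ for all $m$ must, by (\ref{Fundamental Relation}), also satisfy $(e_m, f)_{L^2[a,b]} = 0$ for all $m$, and hence $f = 0$ by completeness of $E$ in $L^2[a,b]$.

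Parts (a) and (b) then follow at once from abstract Hilbert space theory: in any Hilbert space, a complete orthonormal set yields norm convergence of Fourier partial sums and Parseval's identity. No additional work is required beyond invoking this standard result in the Hilbert space $H_n$.

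For part (c), I would compute directly. Since $Z_{m,n,1} = \lambda_m^{-n/2} z_{m,1}$ and $\lambda_m^{-n/2}$ is a positive real number, conjugate linearity of $(\cdot,\cdot)_n$ in its second slot gives
\[ A_{m,n}(f) = (f, Z_{m,n,1})_n = \lambda_m^{-n/2} (f, z_{m,1})_n. \]
The Fundamental Relation (\ref{Fundamental Relation}) reads $(z_{m,1}, f)_n = \lambda_m^n (z_{m,1}, f)_{L^2[a,b]}$, and taking complex conjugates (using that $\lambda_m$ is real) yields $(f, z_{m,1})_n = \lambda_m^n a_m(f)$. Substituting gives $A_{m,n}(f) = \lambda_m^{n/2} a_m(f)$; the argument for $B_{m,n}(f)$ is identical.

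Since each step is either a direct invocation of an earlier theorem or a one-line computation, I do not anticipate any real obstacle. The only point of mild care is tracking the conjugate-linearity convention in the second slot of the inner product, which is harmless here because every scaling factor appearing is a positive real number.
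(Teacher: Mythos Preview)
Your proposal is correct and follows essentially the same approach as the paper: completeness of $E_n$ is quoted from step (iii) of Theorem \ref{H_n - LD space}, parts (a) and (b) are deferred to standard Hilbert space theory, and part (c) is obtained by the same direct computation using $Z_{m,n,j}=\lambda_m^{-n/2}z_{m,j}$ together with the Fundamental Relation (\ref{Fundamental Relation}). If anything, your treatment of the conjugation step in (c) is slightly more explicit than the paper's.
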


\begin{proof}
The fact that $E_{n}$ is a complete orthonormal set in $H_{n}$ is given in the
proof of part (iii) in Theorem \ref{H_n - LD space}. Let $f\in H_{n}.$ The
proofs of parts (a) and (b) are standard for any complete orthonormal set in a
Hilbert space; see \cite[Theorem 4.18]{Rudin}. For $m\in\mathbb{N},$ we see
from (\ref{Fundamental Relation 2}) that
\begin{align}
A_{m,n}(f)  &  =(f,Z_{m,n,1})_{n}\label{Calculation of A_(m,n)}\\
&  =(f,\lambda_{m}^{-n/2}z_{m,1})_{n}=\lambda_{m}^{-n/2}(f,z_{m,1}%
)_{n}\nonumber\\
&  =\lambda_{m}^{-n/2}(f,A^{n}z_{m,1})_{L^{2}[a,b]}\nonumber\\
&  =\lambda_{m}^{-n/2}\lambda_{m}^{n}(f,z_{m,1})_{L^{2}[a,b]}\nonumber\\
&  =\lambda_{n}^{n/2}(f,z_{m,1})_{L^{2}[a,b]}\nonumber\\
&  =\lambda_{n}^{n/2}a_{m}(f);\nonumber
\end{align}
this proves (\ref{Fourier rel 1}). A similar calculation establishes
(\ref{Fourier rel 2}).
\end{proof}

By combining Theorem \ref{H_n - LD space} with Theorems \ref{Main Theorem2}
and \ref{Main Theorem3}, we obtain the following result concerning the
sequence of left-definite operators $\{A_{n}\}_{n\in\mathbb{N}}$ associated
with the pair $(H,A).$

\begin{theorem}
\label{Sequence of LD operators}Let $n\in\mathbb{N}$ and let $H_{n}%
=(V_{n},(\cdot,\cdot)_{n})$ be the $n^{th}$ left-definite operator associated
with the pair $(L^{2}[a,b],A).$ Define the operator $A_{n}:\mathcal{D}%
(A_{n})\subset H_{n}\rightarrow H_{n}$ by%
\begin{align*}
\mathcal{D}(A_{n})  &  :=V_{n+2}\\
&  =\{f:[a,b]\rightarrow\mathbb{C}\mid f^{(j)}\in AC[a,b],\text{ }%
f^{(j)}(a)=-f^{(j)}(b)\text{ }(j=0,1,\ldots,n+1);f^{(n+2)}\in L^{2}[a,b]\}\\
(A_{n}f)(x)  &  :=\ell\lbrack f](x)=-y^{\prime\prime}(x)+ky(x)\quad
(f\in\mathcal{D}(A_{n})).
\end{align*}
Then $A_{n}$ is the $n^{th}$ left-definite operator associated with the pair
$(H,A).$ In particular, $A_{n}$ is self-adjoint in $H_{n}$ and the spectrum
$\sigma(A_{n})$ is a purely discrete point spectrum given explicitly by%
\[
\sigma(A_{n})=\sigma(A)=\left\{  \left(  \frac{(2m-1)\pi}{b-a}\right)
^{2}+k\mid m\in\mathbb{N}\right\}  .
\]

\end{theorem}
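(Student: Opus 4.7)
The plan is to deduce this theorem directly from the abstract machinery reviewed in Section \ref{Left-definite review} once the concrete identification established in Theorem \ref{H_n - LD space} is in hand. First, Theorem \ref{H_n - LD space} has already identified $H_n = (V_n, (\cdot,\cdot)_n)$ as the (unique) $n^{th}$ left-definite space associated with $(L^2[a,b], A)$. Applying Theorem \ref{Main Theorem2} then yields, automatically and uniquely, the $n^{th}$ left-definite operator $A_n$ on $H_n$, whose abstract domain is $V_{n+2}$, the underlying vector space of the $(n+2)^{nd}$ left-definite space. The concrete description of $V_{n+2}$ claimed in the statement is simply Theorem \ref{H_n - LD space} read with $n$ replaced by $n+2$.

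Next, I would verify that $A_n$ acts by $\ell[\cdot]$. By Definition \ref{Definition of LD operators}, any left-definite operator is a restriction of $A$, so it suffices to check the inclusion $V_{n+2} \subset \mathcal{D}(A)$. For $f \in V_{n+2}$ with $n \geq 1$, the requirements $f^{(j)} \in AC[a,b]$ for $j = 0, 1, \ldots, n+1$ immediately give $f, f' \in AC[a,b]$ and $f''$ absolutely continuous (hence bounded, hence in $L^2[a,b]$); the semi-periodic conditions $f(a) = -f(b)$ and $f'(a) = -f'(b)$ are the $j = 0$ and $j = 1$ cases of the boundary conditions built into $V_{n+2}$. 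Hence $f \in \mathcal{D}(A)$ and $A_n f = Af = \ell[f]$, as stated.

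Finally, self-adjointness of $A_n$ in $H_n$ and its lower bound by $kI$ are delivered by Theorem \ref{Main Theorem2}, while the spectral identity $\sigma(A_n) = \sigma(A)$ falls out of parts (a)--(c) of Theorem \ref{Main Theorem3}. The explicit spectrum then follows by combining this with the eigenvalues computed in (\ref{Eigenvalues of A}); these exhaust $\sigma(A)$ because the corresponding eigenfunctions listed in (\ref{Orthonormal basis in L^2[a,b]}) form a complete orthonormal basis of $L^2[a,b]$, so $A$ has purely discrete point spectrum.

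There is no real obstacle in this argument: the deep content is concentrated in Theorem \ref{H_n - LD space} (which identifies the abstract $n^{th}$ left-definite space with the concrete $H_n$ of Definition \ref{Definition of Left-Definite Spaces and Inner products}) and in the general results of \cite{LW} recalled as Theorems \ref{Main Theorem2} and \ref{Main Theorem3}. The only step that requires an explicit, though brief, verification is the inclusion $V_{n+2} \subset \mathcal{D}(A)$, and this is essentially immediate from the absolute continuity hypotheses packaged into $V_{n+2}$.
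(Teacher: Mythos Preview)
Your proposal is correct and follows exactly the approach the paper itself uses: the theorem is stated as an immediate consequence of combining Theorem \ref{H_n - LD space} with Theorems \ref{Main Theorem2} and \ref{Main Theorem3}, and the paper gives no further argument. In fact, your write-up supplies more detail than the paper does, including the explicit check that $V_{n+2}\subset\mathcal{D}(A)$ so that $A_n$ really acts by $\ell[\cdot]$.
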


\section{Concluding Remarks\label{Concluding remarks}}

In this last section, we focus on some special left-definite spaces and
operators for the semi-periodic operator $A.$

\begin{remark}
\label{Square root domain}For an arbitrary self-adjoint operator $A$ in a
Hilbert space that is bounded below by a positive constant we see, from
Theorem \ref{E/U LD Theorem}, that the domain $\mathcal{D}(A^{1/2})$ of its
positive square root $A^{1/2}$ is given by the first left-definite vector
space $V_{1}.$ For our specific operator $A,$ defined in $($%
\ref{Definition of A}$),$ we have the explicit characterization of this
domain:%
\begin{equation}
\mathcal{D}(A^{1/2})=\{f:[a,b]\rightarrow\mathbb{C\mid}f\in
AC[a,b];f(a)=-f(b);f^{\prime}\in L^{2}[a,b]\}.
\label{Square root operator domain}%
\end{equation}

\end{remark}

\begin{remark}
From Theorem \ref{Sequence of LD operators}, the domain of the first
left-definite operator $A_{1},$ which is a self-adjoint operator in the first
left-definite space $H_{1},$ is given by%
\[
\mathcal{D}(A_{1})=V_{3}=\{f:[a,b]\rightarrow\mathbb{C}\mid f^{(j)}\in
AC[a,b]\text{ and }f^{(j)}(a)=-f^{(j)}(b)\text{ }(j=0,1,2);\;f^{(3)}\in
L^{2}[a,b]\}.
\]
Notice that $\mathcal{D}(A_{1})$ is also the domain of $A^{3/2}.$ The domain
of the second left-definite operator $A_{2},$ which is self-adjoint in the
Hilbert space $H_{2}=(V_{2},(\cdot,\cdot)_{2}),$ where%
\begin{align*}
V_{2}  &  =\{f:[a,b]\rightarrow\mathbb{C}\mid f^{(j)}\in AC[a,b]\text{ and
}f^{(j)}(a)=-f^{(j)}(b)\text{ }(j=0,1);\;f^{\prime\prime}\in L^{2}[a,b]\}\\
(f,g)_{2}  &  =\int_{a}^{b}\left(  f^{\prime\prime}(x)\overline{g}%
^{\prime\prime}(x)+2kf^{\prime}(x)\overline{g}^{\prime}(x)+k^{2}%
f(x)\overline{g}(x)\right)  dx
\end{align*}
is given by
\[
\mathcal{D}(A_{2})=V_{4}=\{f:[a,b]\rightarrow\mathbb{C}\mid f^{(j)}\in
AC[a,b]\text{ and }f^{(j)}(a)=-f^{(j)}(b)\text{ }(j=0,1,2,3);f^{(4)}\in
L^{2}[a,b]\}.
\]
Observe that $V_{2}=\mathcal{D}(A).$
\end{remark}

\end{document}